\newcommand{\menge}[2]{\big\{{#1} \;|\; {#2}\big\}}
\newcommand{\emp}{\ensuremath{{\varnothing}}}
\newcommand{\scal}[2]{\left\langle{#1}\mid {#2} \right\rangle}
\newcommand{\vuo}{\ensuremath{\mbox{\footnotesize$\square$}}}
\newcommand{\HH}{\ensuremath{\mathcal H}}
\newcommand{\GG}{\ensuremath{\mathcal G}}
\newcommand{\RR}{\ensuremath{\mathbb R}}
\newcommand{\NN}{\ensuremath{\mathbb N}}
\newcommand{\dom}{\ensuremath{\operatorname{dom}}}
\newcommand{\prox}{\ensuremath{\operatorname{prox}}}
\newcommand{\zer}{\ensuremath{\operatorname{zer}}}
\newcommand{\gra}{\ensuremath{\operatorname{gra}}}
\newcommand{\sss}{\ensuremath{\mathsf{s}}}
\newcommand{\ww}{\ensuremath{\mathsf{w}}}
\newcommand{\xx}{\ensuremath{\mathsf{x}}}
\newcommand{\yy}{\ensuremath{\mathsf{y}}}
\newcommand{\hh}{\ensuremath{\boldsymbol{h}}}
\newcommand{\rr}{r}
\newcommand{\E}{\ensuremath{\mathsf{E}}}
\newcommand{\AAA}{A}
\newcommand{\Id}{\ensuremath{\operatorname{Id}}}
\newcommand{\pinf}{\ensuremath{+\infty}}
\newtheorem{theorem}{Theorem}[section]
\newtheorem{lemma}[theorem]{Lemma}
\newtheorem{corollary}[theorem]{Corollary}
\newtheorem{definition}[theorem]{Definition}
\theoremstyle{plain}{\theorembodyfont{\rmfamily}
\newtheorem{assumption}[theorem]{Assumption}}
\theoremstyle{plain}{\theorembodyfont{\rmfamily}
}
\theoremstyle{plain}{\theorembodyfont{\rmfamily}
\newtheorem{algorithm}[theorem]{Algorithm}}
\theoremstyle{plain}{\theorembodyfont{\rmfamily}
}
\theoremstyle{plain}{\theorembodyfont{\rmfamily}
\newtheorem{problem}[theorem]{Problem}}
\theoremstyle{plain}{\theorembodyfont{\rmfamily}
\newtheorem{remark}[theorem]{Remark}}
\theoremstyle{plain}{\theorembodyfont{\rmfamily}
}
\definecolor{labelkey}{rgb}{0,0.08,0.45}
\definecolor{refkey}{rgb}{0,0.6,0.0}
\definecolor{Brown}{rgb}{0.45,0.0,0.05}
\definecolor{dgreen}{rgb}{0.00,0.49,0.00}
\definecolor{dblue}{rgb}{0,0.08,0.75}
\numberwithin{equation}{section}
\title{A Tseng type stochastic forward-backward algorithm for monotone inclusions}
\author{
Van Dung Nguyen\textsuperscript{a} and Nguyen The Vinh\textsuperscript{a}\thanks{CONTACT N. T. Vinh. Email: thevinhbn@utc.edu.vn}\\
\\
$^a$Department of Mathematical Analysis,\\ University of Transport and Communications, 3 Cau Giay Street, Hanoi, Vietnam \\
dungnv@utc.edu.vn;\ thevinhbn@utc.edu.vn
}
\begin{document}
\maketitle
\begin{abstract}
In this paper, we propose a stochastic version of the classical Tseng's forward-backward-forward method with inertial term for solving monotone inclusions given by the sum of a maximal monotone operator and a single-valued monotone operator in real Hilbert spaces. We obtain the almost sure convergence for the general case and the rate $\mathcal{O}(1/n)$ in expectation for the strong monotone case. 
    Furthermore, we derive $\mathcal{O}(1/n)$ rate convergence of the primal-dual gap for saddle point problems. 
\end{abstract}
\noindent {\bf Keywords:} Forward-backward Splitting Algorithm, Monotone Inclusions, Tseng’s Method, Stochastic Algorithm, Convergence Rate.

\noindent {\bf Mathematics Subject Classifications (2010)}:

\section{Introduction} \label{sec:introduction}
In this paper, we study the following inclusion problem:
\begin{align}\label{pt1}
\text{find $x^*\in \mathcal{H}$ such that}\ \ 0\in Ax^*+Bx^*,
\end{align}
where $\mathcal{H}$ is a separable real Hilbert space, $A: \mathcal{H}\to 2^\mathcal{H}$ is a maximal monotone operator and $B:\mathcal{H}\to \mathcal{H}$ is a monotone operator. The solution set of (\ref{pt1}) is denoted by $(A+B)^{-1}(0)$.

This problem 
plays an important role in many fields,
such as equilibrium problems, fixed point problems, variational inequalities, and composite minimization problems, see, for example, \cite{Aoyama,brezis,Combet2017}. To be more precise, many
problems in signal processing,
computer vision and machine learning
can be modeled mathematically as this formulation, see \cite{Combet1,peyre,beck} and the references therein. For solving the problem (\ref{pt1}), the so-called forward-backward splitting method
is given as follows:
\begin{align}\label{pt2}
x_{n+1}=(I+\lambda A)^{-1}(x_n-\lambda Bx_n),
\end{align}
where $\lambda>0$.

The forward-backward splitting algorithm for  monotone inclusion problems was first introduced by Lions and Mercier \cite{lion}. In the work of Lions and Mercier, other splitting methods, such as Peaceman–
Rachford algorithm \cite{peaceman} and Douglas-Rachford algorithm \cite{douglas} was developed to find the zeros of the sum of two maximal monotone operators.
Since then, it has been studied and reported extensively in the literature; see, for instance, \cite{tseng,Vol,combet,bot,prasit2,eckstein,thong} and the references therein. 
Recently, stochastic versions
of splitting algorithms for monotone inclusions have been proposed, for example stochastic forward-backward splitting method \cite{bang1,bang2},
stochastic Douglas-Rachford splitting method \cite{bang3},  stochastic reflected forward-backward splitting method \cite{DB1} and stochastic primal-dual method \cite{bang4}, see also \cite{Combet1,bianchi,pesquet} and applications to stochastic optimization \cite{bang1,Combettes2015} and
machine learning \cite{bang5,duchi}.

 Motivated and inspired by the algorithms in \cite{tseng,bang1,bang2,bang6,DB1}, we will introduce a new stochastic splitting algorithm for inclusion problems. The convergence and the rate convergence of the proposed algorithm are obtained.
 
The rest of the paper is organized as follows. After collecting some definitions and basic results in Section 2, we prove in Section 3 the almost sure convergen for the general case and the strong convergence along with the rate convergence in the strongly monotone case. 

In section 4, we apply the proposed algorithm to the convex-concave saddle point problem. 
\section{Preliminaries}\label{sec2}
\label{Sec:Prelims}
Let $\mathcal{H}$ be a separable real Hilbert space endowed with the inner product $\scal{}{}$ and the associated norm $\|\cdot\|$. When $(x_n)_{n \in \NN}$ is a sequence in $\mathcal{H}$, 
we denote strong convergence of $(x_n)$ to $x\in \mathcal{H}$ by $x_n\to x$ and weak convergence by $x_n\rightharpoonup x$.

We recall some well-known definitions.
\begin{definition} Let $A : \mathcal{H} \to 2^\mathcal{H}$ be a set-valued mapping with
	nonempty values.
	\begin{enumerate}
		\item[\textup{(1)}] $A$ is said
		to be monotone if for all $x, y \in \mathcal{H},\ u \in Ax$
		and $v \in Ay$, the following inequality holds $\scal{ u - v}{ x -y} \geq 0$. 
		\item[\textup{(2)}]$A$ is said
		to be
		{maximally monotone}, if it is monotone and if for any $(x, u) \in \mathcal{H} \times \mathcal{H}$, $\scal{u - v}{ x - y} \geq 0 $ for every $(y, v) \in$ $\gra A=\{(x,y):y\in Ax\}$ (the graph of mapping $A$) implies that $u \in Ax$.
		\item[\textup{(3)}] We say that  $A$  is $\phi_A$-uniformly monotone, if there 
 exists an increasing function $\phi_A\colon\left[0,\infty\right[\to \left[0,\infty\right]$ that vanishes only at $0$ such that 
 \begin{equation}
\big(\forall (x,u),(y,v)\in\gra A\big)
\quad\scal{x-y}{u-v}\geq \phi_A(\|y-x\|).
\end{equation}
If $\phi_A = \nu_A|\cdot|^2$  for some $\nu_A\in \left]0,\infty\right[$, then we say that $A$ is $\nu_A$-strongly monotone.
		\item[\textup{(4)}] Let $\Id$ denote the identity operator on $\mathcal{H}$ and
		$A : \mathcal{H} \to 2^\mathcal{H}$ be a maximal monotone operator. For each $\lambda>0$, {the resolvent mapping} $J^A_\lambda:\mathcal{H}\to \mathcal{H}$ associated with $A$ is defined by
		\begin{equation}
		J^A_\lambda(x):=(\Id+\lambda A)^{-1}(x) \  \forall x\in \mathcal{H}.
		\end{equation}
	
	\end{enumerate}
\end{definition}
\begin{definition} A mapping $T:\mathcal{H}\to \mathcal{H}$ is said to be
	\begin{enumerate}
		\item[\textup{(1)}] {firmly nonexpansive} if
		\begin{equation*}
		\|Tx-Ty\|^2\le \scal{Tx-Ty}{x-y} \ \ \  \forall x,y\in \mathcal{H},
		\end{equation*}
		or equivalently
		\begin{equation*}
		\|Tx-Ty\|^2\le\|x-y\|^2-\|(I-T)x-(I-T)y\|^2  \ \ \  \forall x,y\in \mathcal{H}.
		\end{equation*}
		\item[\textup{(2)}] $L$-{Lipschitz continuous} with $L>0$ if
		\begin{equation}
		\|Tx-Ty\|\le L \|x-y\|  \ \ \ \forall x,y \in \mathcal{H}.
		\end{equation}
	\end{enumerate}
\end{definition}
Let $\Gamma_0(\mathcal{H})$ be the class of proper lower semicontinuous convex functions from $\mathcal{H}$ to $\left ]-\infty,+\infty\right]$. 
\begin{definition}
For $f \in \Gamma_0(\mathcal{H})$:
\begin{enumerate}
\item[\textup{(1)}] $ \dom f=\{x \in \HH,\ f(x)<+\infty \}$. The subdifferential of $f$ at $x \in \dom f$ is $$ \partial f(x)=\{u \in \HH,\ \forall z \in \dom f:\ f(z) \ge f(x)+\scal{u}{z-x} \}.$$
\item[\textup{(2)}] The proximity operator of $f$ is
\begin{align}
\text{prox}_f:\mathcal{H} \to \mathcal{H}: x \mapsto \underset{y \in \mathcal{H}}{\operatorname{argmin}}\big(f(y)+\frac{1}{2}\|x-y\|^2\big).
\end{align}
\item[\textup{(3)}] The conjugate function of $f$ is
\begin{align}
f^*:a \mapsto \underset{x \in \mathcal{H}}{\sup} \big( \scal{a}{x}-f(x) \big).
\end{align}
 \item[\textup{(4)}] The infimal convolution of the two functions $\ell$ and $g$ from $\HH$ to $\left]-\infty,+\infty\right]$ is 
 \begin{equation}
 \ell\;\vuo\; g\colon x \mapsto \inf_{y\in\HH}(\ell(y)+g(x-y)).
\end{equation}
 \end{enumerate}
\end{definition}
Note that  $\text{prox}_f=J_{\partial f}$ and  
\begin{align}
(\forall f \in \Gamma_0(\mathcal{H})) \ \ (\partial f)^{-1}=\partial f^*.
\end{align}

We now recall some results which are needed in sequel.

\begin{lemma}\label{lv1} {\rm(\cite{Ta2010})} Let $A: \mathcal{H}\to 2^\mathcal{H}$ be a set-valued maximal monotone mapping and $\lambda>0$. Then the domain of the resolvent of $A$ is the whole space, that is $D(J^A_\lambda)=\mathcal{H}$, and in addition $J^{A}_\lambda$ is a single-valued and firmly nonexpansive mapping.
\end{lemma}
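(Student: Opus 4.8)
The plan is to separate the two assertions, because they are of very different difficulty. The single-valuedness and the firm nonexpansiveness are soft facts that rely only on monotonicity of $A$, whereas the claim $\dom J^A_\lambda=\HH$ is the substantive statement and is exactly Minty's theorem, where maximality is indispensable.

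First I would dispatch the easy half. Fix $x_1,x_2\in\HH$ and suppose $p_i$ is a value of $J^A_\lambda(x_i)$, which by definition of the resolvent means $(x_i-p_i)/\lambda\in Ap_i$. Feeding the two pairs $\big(p_1,(x_1-p_1)/\lambda\big)$ and $\big(p_2,(x_2-p_2)/\lambda\big)$ into the monotonicity inequality and clearing the factor $\lambda>0$ gives
\begin{equation*}
\scal{p_1-p_2}{(x_1-x_2)-(p_1-p_2)}\ge 0,
\end{equation*}
that is, $\|p_1-p_2\|^2\le\scal{p_1-p_2}{x_1-x_2}$. This is precisely the firmly nonexpansive inequality of Definition 2.2(1); taking $x_1=x_2$ forces $\|p_1-p_2\|^2\le 0$, hence $p_1=p_2$, so $J^A_\lambda$ is single-valued wherever it is defined. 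No maximality enters here.

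The core is to show $\ran(\Id+\lambda A)=\HH$, so that $J^A_\lambda=(\Id+\lambda A)^{-1}$ is defined on all of $\HH$. Since $\lambda A$ is again maximal monotone, I would rescale and assume $\lambda=1$, reducing to surjectivity of $\Id+A$. Writing $R:=\ran(\Id+A)$, the strategy is to prove that $R$ is nonempty, closed, and open, and then invoke the connectedness of $\HH$ to conclude $R=\HH$. Nonemptiness is immediate, since $\gra A\ne\emp$ yields $a+u\in R$ for any $(a,u)\in\gra A$. For closedness I would take $x_n\in R$ with $x_n\to x$ and set $p_n=J^A_1x_n$; the firm nonexpansiveness just proved makes $(p_n)$ Cauchy, so $p_n\to p$ and $x_n-p_n\to x-p$ with $(p_n,x_n-p_n)\in\gra A$. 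Here maximality enters through the strong--strong closedness of $\gra A$ (a standard consequence of maximal monotonicity, obtained by passing to the limit in $\scal{p_n-y}{(x_n-p_n)-v}\ge 0$ for each $(y,v)\in\gra A$), which gives $x-p\in Ap$, i.e. $x\in R$.

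I expect the openness of $R$ to be the main obstacle. Fixing $x_0\in R$ with $p_0=J^A_1x_0$, one must produce a radius $\rho>0$ such that every $y$ with $\|y-x_0\|<\rho$ again lies in $R$, i.e. the equation $y-p\in Ap$ is solvable. The natural route is a Banach fixed-point argument: one sets up a self-map on a closed ball whose fixed points are exactly the sought resolvent values of the perturbed data, and exploits the $1$-Lipschitz behaviour of the already-defined resolvent together with a contraction factor coming from the smallness of $\|y-x_0\|$. Checking that this map is a strict contraction and maps the ball into itself is the delicate computation. Once $R$ is shown to be nonempty, closed and open, connectedness of $\HH$ gives $R=\HH$; the single-valuedness from the first step then upgrades $J^A_1$ to a genuine mapping on all of $\HH$, and undoing the rescaling $\lambda=1\mapsto\lambda$ completes the proof.
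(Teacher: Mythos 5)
First, note that the paper does not prove this lemma at all: it is quoted as a known result (essentially Minty's theorem together with standard resolvent properties) with a citation to Takahashi--Takahashi--Toyoda, so there is no in-paper argument to compare against; your attempt must stand on its own. Its first half does: the derivation of $\|p_1-p_2\|^2\le\scal{p_1-p_2}{x_1-x_2}$ from monotonicity, hence single-valuedness and firm nonexpansiveness of $J^A_\lambda$ on its domain, is correct and uses only monotonicity, as you say. The reduction to $\lambda=1$ and the closedness of $R=\ran(\Id+A)$ (Cauchy-ness of $p_n=J^A_1x_n$ via nonexpansiveness, plus strong--strong closedness of $\gra A$ from maximality) are also sound.

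The genuine gap is the openness of $R$, which you defer to an unspecified Banach fixed-point argument; this step is not a ``delicate computation'' but the entire content of Minty's theorem, and the contraction scheme you gesture at is circular. Any natural iteration for solving $y-p\in Ap$ near a known point $x_0\in R$ has the form $p_{k+1}=J^A_\mu(\text{expression in }p_k,y)$ for some index $\mu>0$; but $J^A_\mu$ is only known to be defined on $\ran(\Id+\mu A)$, so at every step you must already know that the point being fed to the resolvent lies in the very set whose size you are trying to establish. The classical contraction argument of this flavour (used for accretive operators, where one passes from $\ran(\Id+\lambda_0A)=\HH$ to $\ran(\Id+\lambda A)=\HH$ for nearby $\lambda$) works precisely because the resolvent of index $\lambda_0$ is assumed to have \emph{full} domain --- which is what is to be proved here. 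Actual proofs of surjectivity avoid this circularity by a genuinely different mechanism: either convex-analytic (minimize the Fitzpatrick function plus $\tfrac12\|x\|^2+\tfrac12\|u\|^2$ and use Fenchel duality, as in Bauschke--Combettes, Thm.~21.1), or by extending the firmly nonexpansive map $J^A_1\colon R\to\HH$ to all of $\HH$ via the Kirszbraun--Valentine theorem and invoking maximality to conclude the extended map is still a resolvent of $A$, forcing $R=\HH$. Your nonempty--open--closed--connected skeleton would be valid if openness could be proved, but as written the proposal omits the one step where maximality must do real work, so it does not constitute a proof.
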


\begin{lemma}\label{2lv1} {\rm(\cite{Br1973}, Lemma 2.4)}
	Let $A: \mathcal{H} \to 2^\mathcal{H}$ be a maximal monotone mapping and  $B: \mathcal{H} \to \mathcal{H}$
	be a Lipschitz continuous and monotone mapping. Then the mapping  $A +B$ is a maximal monotone mapping.
\end{lemma}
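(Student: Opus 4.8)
The plan is to separate the easy part (monotonicity of the sum) from the substantive part (maximality), and to obtain the latter from Minty's surjectivity characterization: a monotone operator $M$ on $\mathcal{H}$ is maximally monotone if and only if $\ran(\Id+\lambda M)=\mathcal{H}$ for some (equivalently every) $\lambda>0$. Monotonicity of $A+B$ is immediate: given $(x,u),(y,v)\in\gra(A+B)$, write $u=a+Bx$ and $v=b+By$ with $a\in Ax$ and $b\in Ay$; then $\scal{u-v}{x-y}=\scal{a-b}{x-y}+\scal{Bx-By}{x-y}\ge 0$, since $A$ is monotone and $B$ is monotone. Thus the whole weight of the proof falls on establishing the range condition for a conveniently chosen $\lambda$.

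To verify that condition, I would let $L$ be the Lipschitz constant of $B$, fix $\lambda\in\left]0,1/L\right[$, and take an arbitrary $z\in\mathcal{H}$; the goal is to solve $z\in x+\lambda(A+B)x$. Rearranging, this is equivalent to $z-\lambda Bx\in(\Id+\lambda A)x$, that is, $x=J^A_\lambda(z-\lambda Bx)$, so I am really looking for a fixed point of the map $T\colon x\mapsto J^A_\lambda(z-\lambda Bx)$. By Lemma \ref{lv1}, $J^A_\lambda$ is everywhere defined, single-valued and (firmly) nonexpansive, so combining this with the $L$-Lipschitz continuity of $B$ gives, for all $x,x'\in\mathcal{H}$, the estimate $\|Tx-Tx'\|\le\|\lambda Bx-\lambda Bx'\|\le\lambda L\|x-x'\|$. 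Since $\lambda L<1$, the map $T$ is a strict contraction on the complete metric space $\mathcal{H}$, and the Banach fixed point theorem produces a (unique) $x^\ast$ with $x^\ast=Tx^\ast$. Unwinding the definition of $T$ yields $z\in x^\ast+\lambda Ax^\ast+\lambda Bx^\ast$, hence $z\in\ran(\Id+\lambda(A+B))$; as $z$ was arbitrary, $\ran(\Id+\lambda(A+B))=\mathcal{H}$.

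The one point that needs genuine care, and which I expect to be the main obstacle, is that the Lipschitz constant $L$ of $B$ need not be smaller than $1$, so the unscaled iteration $x\mapsto J^A_1(z-Bx)$ is only $L$-Lipschitz and may fail to be a contraction; a direct Banach argument at $\lambda=1$ therefore does not go through. This is precisely why I would introduce the scaling parameter $\lambda$ and exploit the freedom in Minty's theorem to test surjectivity of $\Id+\lambda(A+B)$ rather than $\Id+(A+B)$, choosing $\lambda$ small enough that $\lambda L<1$. Once the range condition is secured for this single $\lambda>0$, Minty's characterization applied to the monotone operator $A+B$ shows that $A+B$ is maximally monotone, which completes the argument. (Note that I only use that $B$ is monotone and Lipschitz, not that it is itself maximally monotone, although the latter also holds.)
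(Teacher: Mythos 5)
Your proof is correct, but note that the paper offers no proof of this lemma at all: it is stated as a quoted fact from Br\'ezis (Lemma 2.4 of the 1973 lecture notes), so there is no in-paper argument to compare against. Your blind argument is essentially the classical proof of that cited result. The decomposition is right: monotonicity of $A+B$ is immediate from adding the two monotonicity inequalities, and maximality is reduced via Minty's criterion (a monotone $M$ is maximal iff $\ran(\Id+\lambda M)=\mathcal{H}$ for some, equivalently every, $\lambda>0$) to solving $z\in x+\lambda(A+B)x$, which you correctly recast as the fixed point equation $x=J^A_\lambda(z-\lambda Bx)$. Since $J^A_\lambda$ is everywhere defined, single-valued and nonexpansive (this is exactly Lemma \ref{lv1} of the paper), the map $x\mapsto J^A_\lambda(z-\lambda Bx)$ is $\lambda L$-Lipschitz, hence a strict contraction once $\lambda<1/L$, and Banach's fixed point theorem yields the required surjectivity. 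You also correctly isolated the one genuine subtlety: at $\lambda=1$ the iteration need not contract, and the argument survives only because Minty's theorem permits testing the range condition at a single small $\lambda$. Two peripheral points worth recording: your proof uses essentially that $B$ is defined on all of $\mathcal{H}$ (without full domain, sums of maximal monotone operators can fail to be maximal), and it uses the Lipschitz hypothesis in an essential way --- Br\'ezis's lemma in the generality of merely hemicontinuous monotone $B$ requires a different, less elementary argument, so your proof is exactly calibrated to the hypotheses as stated here.
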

\indent 
Following \cite{Led}, let $(\Omega, {\EuScript{F}},\mathsf{P})$ be a probability space. A $\mathcal H$-valued random variable is a measurable function $X:\Omega \to \mathcal H$, where $\mathcal H$ is endowed with the Borel $\sigma$-algebra. We denote by $\sigma(X)$ the $\sigma$-field generated by $X$. The expectation of a random variable $X$ is denoted by $\mathsf{E}[X]$. The conditional expectation of $X$ given a $\sigma$-field ${\EuScript{A}} \subset {\EuScript{F}}$ is denoted by $\mathsf{E}[X|{\EuScript{A}}]$. A $\mathcal H$-valued random process is a sequence $(x_n)$ of $\mathcal H$-valued random variables. The abbreviation a.s. stands for 'almost surely'.
\begin{lemma}{\upshape(\cite[Theorem 1]{robbins})} \label{lm1}
	Let $({\EuScript{F}}_n)_{n \in \mathbb N}$ be an increasing sequence of sub-$\sigma$-algebras of ${\EuScript{F}}$, let $(z_n)_{n \in \NN},\ (\beta_n)_{n \in \NN},\ (\theta_n)_{n \in \NN}$ and $(\gamma_n)_{n \in \NN}$ be $[0,+\infty]$-valued random sequences such that, for every $n \in \mathbb N$, $z_n,\ \beta_n,\ \theta_n$ and $\gamma_n$ are ${\EuScript{F}}_n$-measurable. Suppose that $\sum_{n \in \mathbb N}\gamma_n<+\infty,\ \sum_{n \in \mathbb N}\beta_n<+\infty \ a.s.$ and
	\begin{align*}
	(\forall n \in \mathbb N)\ \mathsf{E}[z_{n+1}|{\EuScript{F}}_n] \le (1+\gamma_n)z_n+\beta_n-\theta_n \ a.s..
	\end{align*}
	Then $z_n$ converges a.s. and $(\theta_n)_{n \in \NN}$ is summable a.s..
\end{lemma}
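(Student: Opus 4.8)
The plan is to turn the stated inequality into a genuine nonnegative supermartingale by first absorbing the multiplicative factor $(1+\gamma_n)$, and then to invoke Doob's supermartingale convergence theorem, the one subtlety being that the summability hypotheses hold only almost surely and not in expectation, which forces a localization argument.

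First I would remove the factor $(1+\gamma_n)$. Since each $\gamma_n$ is ${\EuScript{F}}_n$-measurable and $\sum_{n}\gamma_n<\infty$ a.s., the products $P_n:=\prod_{k=0}^{n-1}(1+\gamma_k)^{-1}$ are well defined, nonincreasing, ${\EuScript{F}}_{n-1}$-measurable, and converge a.s. to a strictly positive limit $P_\infty\in\left]0,1\right]$. Multiplying the hypothesis by the nonnegative ${\EuScript{F}}_n$-measurable factor $P_{n+1}$ and using the identity $P_{n+1}(1+\gamma_n)=P_n$ produces the clean recursion $\mathsf{E}[z_n'\,|\,{\EuScript{F}}_n]$-type estimate $\mathsf{E}[z_{n+1}'\,|\,{\EuScript{F}}_n]\le z_n'+\beta_n'-\theta_n'$ a.s., where $z_n'=P_nz_n$, $\beta_n'=P_{n+1}\beta_n$ and $\theta_n'=P_{n+1}\theta_n$. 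Because $P_n\le 1$ we still have $\sum_n\beta_n'\le\sum_n\beta_n<\infty$ a.s.

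Next I would set $W_n=z_n'-\sum_{k=0}^{n-1}\beta_k'+\sum_{k=0}^{n-1}\theta_k'$; a direct computation from the clean recursion gives $\mathsf{E}[W_{n+1}\,|\,{\EuScript{F}}_n]\le W_n$ a.s., so $(W_n)$ is a supermartingale bounded below by $-\sum_k\beta_k'$. \emph{Here lies the main obstacle}: this lower bound is only almost surely finite, not uniformly integrable, so the convergence theorem does not apply directly. I would circumvent this by localizing. For each $m\in\NN$ define $\tau_m=\inf\{n:\ \sum_{k=0}^{n-1}(\gamma_k+\beta_k)>m\}$, which is an $({\EuScript{F}}_n)$-stopping time since the partial sums are ${\EuScript{F}}_{n-1}$-measurable. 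On the stopped process $W_{n\wedge\tau_m}$ the negative part is bounded by the deterministic constant $m$, which supplies the integrability needed to apply the supermartingale convergence theorem, so $W_{n\wedge\tau_m}$ converges a.s.

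Finally I would assemble the conclusion. Since $\sum_k(\gamma_k+\beta_k)<\infty$ a.s., the events $\{\tau_m=\infty\}$ increase to a set of full probability as $m\to\infty$, and on each of them $W_n=W_{n\wedge\tau_m}$ converges a.s. On this convergence set $\sum_{k<n}\beta_k'$ converges, hence $z_n'+\sum_{k<n}\theta_k'$ converges; as $z_n'\ge 0$ and the partial sums of $\theta_k'\ge 0$ are nondecreasing, each summand must converge separately, yielding $\sum_n\theta_n'<\infty$ together with convergence of $z_n'$ a.s. Undoing the normalization with $P_n\to P_\infty\in\left]0,1\right]$ then gives that $z_n=z_n'/P_n$ converges a.s. and that $\sum_n\theta_n=\sum_n\theta_n'/P_{n+1}\le P_\infty^{-1}\sum_n\theta_n'<\infty$ a.s., which is exactly the assertion.
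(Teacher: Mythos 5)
The paper offers no proof of this lemma: it is quoted directly as Theorem~1 of Robbins--Siegmund \cite{robbins}, so the only meaningful comparison is with the classical argument, and your proposal reproduces exactly that classical route. The normalization $P_n=\prod_{k=0}^{n-1}(1+\gamma_k)^{-1}$ with the identity $P_{n+1}(1+\gamma_n)=P_n$, the compensation $W_n=z_n'-\sum_{k<n}\beta_k'+\sum_{k<n}\theta_k'$ turning the almost-supermartingale into a supermartingale, the localization by stopping times, and the de-normalization using $P_n\downarrow P_\infty>0$ (which gives both convergence of $z_n=z_n'/P_n$ and $\sum_n\theta_n\le P_\infty^{-1}\sum_n\theta_n'$) are all precisely the Robbins--Siegmund mechanism, and the closing step --- extracting separate convergence of $z_n'$ and of the nondecreasing partial sums of $\theta_k'$ from convergence of their sum and nonnegativity --- is correct.

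Two technical points in your localization need repair. First, with $\tau_m=\inf\{n:\ \sum_{k=0}^{n-1}(\gamma_k+\beta_k)>m\}$ the stopped process is \emph{not} bounded below by $-m$: at time $\tau_m$ itself the compensator has already overshot, so $W_{\tau_m}\ge-\sum_{k=0}^{\tau_m-1}\beta_k'$ where that sum exceeds $m$ by the final increment $\beta_{\tau_m-1}$, which is not deterministically bounded. Define instead $\tau_m=\inf\{n:\ \sum_{k=0}^{n}(\gamma_k+\beta_k)>m\}$ (still a stopping time, the sum being $\EuScript{F}_n$-measurable); then $\sum_{k<j}\beta_k'\le m$ for every $j\le\tau_m$ and the bound holds. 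Second, a bounded negative part is not by itself ``the integrability needed'': Doob's convergence theorem presupposes $W_{n\wedge\tau_m}\in L^1$, and nothing in the hypotheses makes $z_n$ integrable --- the lemma assumes only $[0,+\infty]$-valued variables, with generalized conditional expectations. The standard cure is to intersect with the $\EuScript{F}_0$-event $A_K=\{z_0\le K\}$: then $(W_{n\wedge\tau_m}+m)\mathbf{1}_{A_K}$ is a nonnegative supermartingale with expectation at most $K+m$, hence a.s.\ convergent, and letting $K\to\infty$, $m\to\infty$ yields your conclusion on $\{z_0<+\infty\}$. This truncation is not cosmetic: as literally stated for $[0,+\infty]$-valued $z_n$ the lemma fails (take $z_n\equiv+\infty$, $\gamma_n=\beta_n=0$, $\theta_n=1$; the recursion holds but $\sum_n\theta_n=+\infty$), so a.s.\ finiteness of $z_0$ --- automatic in this paper's application, where $z_n=\|x_n-p\|^2$ with square-integrable iterates --- must enter, and it enters exactly at this truncation step.
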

According to the proof of Proposition 2.3 
\cite{Combettes2015}, we have the following lemma.
\begin{lemma} \label{lm2}
	Let $C$ be a non-empty closed subset of $\mathcal{H}$ and let $(x_n)_{n\in\NN}$ be a $\mathcal{H}$-valued random process.
	Suppose that, for every $x\in C$, $(\|x_{n+1}-x\|)_{n\in\NN}$ converges a.s.. 
Suppose that the set of weak sequentially cluster points of $(x_n)_{n\in\NN}$ is a subset of C a.s.. Then $(x_n)_{n\in\NN}$ converges weakly a.s. to a $C$-valued random vector.
\end{lemma}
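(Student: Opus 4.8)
The plan is to prove a stochastic counterpart of Opial's lemma: on a single event of full probability I run the classical deterministic argument pointwise in $\omega$, and then I check that the resulting weak limit is measurable.

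First I would remove the dependence of the almost-sure events on the point $x$. Since $\mathcal H$ is separable, the subset $C$ is separable, hence it admits a countable dense subset $D=\{z_k:k\in\NN\}$. For each $k$ the hypothesis furnishes an event $\Omega_k$ with $\mathsf P(\Omega_k)=1$ on which $(\|x_n-z_k\|)_{n\in\NN}$ converges; put $\widetilde\Omega_1=\bigcap_{k\in\NN}\Omega_k$, still of full probability. The key observation is that on $\widetilde\Omega_1$ the sequence $(\|x_n-x\|)_{n\in\NN}$ converges for \emph{every} $x\in C$, not only for the $z_k$: given $x\in C$ and $\varepsilon>0$, choose $z_k\in D$ with $\|x-z_k\|\le\varepsilon$; since $\big|\|x_n-x\|-\|x_n-z_k\|\big|\le\varepsilon$ and $(\|x_n-z_k\|)$ converges, the oscillation $\limsup_n\|x_n-x\|-\liminf_n\|x_n-x\|$ is at most $2\varepsilon$, and letting $\varepsilon\downarrow 0$ forces convergence. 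Intersecting with the full-probability event $\widetilde\Omega_2$ on which all weak sequential cluster points of $(x_n)$ lie in $C$, I obtain a single event $\widetilde\Omega=\widetilde\Omega_1\cap\widetilde\Omega_2$ of probability one on which both properties hold simultaneously.

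Next I would argue pointwise on $\widetilde\Omega$. Fixing $\omega\in\widetilde\Omega$ and any $z\in D$, convergence of $(\|x_n-z\|)$ makes $(x_n)$ bounded, so it has at least one weak cluster point, and every such point lies in $C$. To get uniqueness, let $u$ and $v$ be two weak cluster points, so $u,v\in C$ and both $(\|x_n-u\|)$ and $(\|x_n-v\|)$ converge. From the identity $\scal{x_n}{u-v}=\tfrac12\big(\|x_n-v\|^2-\|x_n-u\|^2+\|u\|^2-\|v\|^2\big)$ the scalar sequence $\scal{x_n}{u-v}$ converges; evaluating its limit along a subsequence with $x_{n}\weakly u$ and along one with $x_{n}\weakly v$ gives $\scal{u}{u-v}=\scal{v}{u-v}$, i.e. $\|u-v\|^2=0$. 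Thus the bounded sequence $(x_n(\omega))$ has a single weak cluster point and therefore converges weakly to it; I call this point $x_\infty(\omega)$, which belongs to $C$ because it is a weak cluster point.

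Finally I would verify measurability. Extending $x_\infty$ arbitrarily (say by $0$) off $\widetilde\Omega$, for each fixed $y\in\mathcal H$ we have $\scal{x_\infty}{y}=\lim_n\scal{x_n}{y}$ almost surely, a pointwise limit of real random variables and hence measurable; so $x_\infty$ is weakly measurable, and since $\mathcal H$ is separable the Pettis measurability theorem upgrades this to Borel measurability. Therefore $x_\infty$ is an $\mathcal H$-valued random variable with $x_\infty\in C$ a.s. and $x_n\weakly x_\infty$ a.s. The step I expect to be delicate is the first one: the hypotheses only supply, for each individual $x\in C$, an almost-sure event that may depend on $x$, and $C$ is in general uncountable, so these events cannot be intersected directly. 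Passing through a countable dense subset and transferring convergence to all of $C$ by the triangle-inequality/oscillation estimate is precisely what collapses them to one event; the measurability point is then routine, and the remaining pointwise argument is just the classical Opial lemma.
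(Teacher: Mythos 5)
Your proof is correct, and it follows essentially the same route as the paper, which does not give its own argument but invokes the proof of Proposition 2.3 in \cite{Combettes2015}: that proof likewise passes to a countable dense subset of $C$ to obtain a single full-measure event, extends the convergence of $(\|x_n-x\|)_{n\in\NN}$ to all $x\in C$ by a density/triangle-inequality estimate, runs the deterministic Opial argument pointwise, and concludes measurability of the weak limit from separability. Your explicit handling of the uncountability issue and the Pettis measurability step is exactly the content of that cited argument.
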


\section{Main results}\label{sec:Alg}
In this section, we propose a novel stochastic forward-backward-forward algorithm for solving the problem \ref{pt1} and analyse its convergence behaviour. Unless otherwise specified, we assume that the following assumptions are satisfied from now on.
\begin{assumption}\label{giathiet}\upshape  In what follows we suppose the following assumptions for $A$ and $B$:
	\begin{enumerate}
		
		\item[\textup{(A1)}] The mapping $B:\mathcal{H}\to \mathcal{H}$ is $L$-Lipschitz continuous and monotone;
		\item[\textup{(A2)}] The set-valued maping $A: \mathcal{H}\to 2^\mathcal{H}$ is maximal monotone.
		\item[\textup{(A3)}] The solution set $\mathcal{P}=\zer(A+B)=(A+B)^{-1}(0)\ne \emptyset$.
		
	\end{enumerate}
\end{assumption}
The algorithm is designed as follows.
\begin{algorithm}
	\label{alg1}
	\vspace*{0.3em}
	{\bf Step 0:} {(Initialization)} Choose $\theta \in [0,1]$. Let $(\lambda_n)_{n \in \NN}$ be a positive sequence, $(\epsilon_n)_{n \in \NN}\subset[0,+\infty)$ satisfying
		\begin{align} \sum_{n=0}^{+\infty}\epsilon_n<+\infty. \\
	 	\end{align}
	Let $x_{-1},\ x_0$ be  $\mathcal{H}$-valued, squared integrable
	random variables and set $n=0$.\\ \vspace*{0.2cm}\noindent
	{\bf Step 1:} Given $x_{n-1},\ x_n$ {\upshape(}$n\geq 0${\upshape)}, choose $\alpha_n$ such that
	\begin{align}\label{dka1}
	{\alpha}_n=\begin{cases}
	\min\bigg\{\dfrac{\epsilon_n}{\|x_n-x_{n-1}\|},\theta\bigg\}&\text{if $x_n \neq x_{n-1}$,}\\
	\theta&\text{if $x_n=x_{n-1}$.}
	\end{cases}
	\end{align}
	Let $r_n$ be a random vector.
Compute
	\begin{align*}
	w_n&=x_n+\alpha_n(x_n-x_{n-1}),\\
	y_n&=(I+\lambda_n A)^{-1}(w_n-\lambda_n r_n).
	\end{align*}
	{\bf Step 2:} Let $s_n$ be an unbiased estimator of $By_n$, i.e., $\mathsf{E}[s_n|{\EuScript{F}_n}]=By_n$. Calculate the next iterate as
	\begin{align}\label{3.9}
	x_{n+1}&=y_n-\lambda_n (s_n-r_n),
	\end{align}
	where $\EuScript{F}_n=\sigma(x_{-1},x_0,r_0,x_1,r_1,\ldots,x_n,r_n)$.\\
	Let $n:=n+1$ and return to {\bf Step 1}.
	
\end{algorithm}
\begin{remark}\upshape
	Some remarks on the algorithm are in order now. 
	\begin{enumerate}
	\item[{\upshape(1)}] Algorithm \ref{alg1} is an extension of the forward-backward-forward splitting method in \cite{tseng} which is in the deterministic setting. In the setting of this method, we do not need the cocoercive condition as in \cite{Combettes2015,Combettes2016,bang2}. 
		\item [{\upshape(2)}]When $\alpha_n=0$, Algorithm \ref{alg1} reduces to (3.2) in \cite{bang6}. However, the conditions for the convergences are different from that in \cite{bang6}.
		\item [{\upshape(3)}]In \cite{cui1,cui2}, for solving \eqref{pt1}, the authors designed stochastic forward-backward-forward splitting methods which require a large number of samples in each iteration. Our results are also different from that in \cite{cui1,cui2}.
			\item [{\upshape(4)}] Evidently, we have from (\ref{dka1}) that
		\begin{align}\label{hqal}
		 \alpha_n\|x_n-x_{n-1}\|\leq\epsilon_n.
		 \end{align}
		
	\end{enumerate}
\end{remark}

\begin{lemma} \label{lm1add}
 Let $(x_n)$ be generated by Algorithm \ref{alg1}, then the following holds:   
 \begin{align}
     \|x_{n+1}-p\|^2 &\le \|w_n-p\|^2-(1- \lambda_n ^2 L^2) \|w_n-y_n\|^2+ \lambda_n^2 \big( \|s_n-By_n\|^2+\|r_n-Bw_n\|^2\big)\notag \\
&\hskip2cm\ +2\lambda_n^2 \big(\scal{s_n-By_n}{By_n-r_n}+\scal{By_n-Bw_n}{Bw_n-r_n} \big) \notag \\
&\hskip2cm\ +2 \scal{y_n-w_n-\lambda_n (By_n-r_n)}{y_n-p}  +2\lambda_n \scal{By_n-s_n}{y_n-p},\label{elm1}
 \end{align}
 for any $p\in \mathcal{P}$.
\end{lemma}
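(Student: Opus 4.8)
The plan is to establish \eqref{elm1} as a purely algebraic expansion of $\|x_{n+1}-p\|^2$ followed by a single application of the Lipschitz continuity of $B$. It is worth noting in advance that neither the monotonicity of $A$ or $B$ nor the hypothesis $p\in\mathcal P$ is actually needed here, so the estimate will in fact hold for every $p\in\mathcal H$; the restriction to $p\in\mathcal P$ becomes relevant only when this lemma is later combined with monotonicity to control the remaining inner-product terms.

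First I would substitute the update rule \eqref{3.9}, that is $x_{n+1}=y_n-\lambda_n(s_n-r_n)$, and expand the square:
\[
\|x_{n+1}-p\|^2=\|y_n-p\|^2-2\lambda_n\scal{s_n-r_n}{y_n-p}+\lambda_n^2\|s_n-r_n\|^2.
\]
Then, using the elementary identity (obtained by expanding $\|w_n-p\|^2$ around $y_n$)
\[
\|y_n-p\|^2=\|w_n-p\|^2-\|w_n-y_n\|^2+2\scal{y_n-w_n}{y_n-p},
\]
I would replace $\|y_n-p\|^2$ so that the leading terms $\|w_n-p\|^2-\|w_n-y_n\|^2$ emerge.

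The next step is the bookkeeping of the first-order terms. Here I would use the exact identity
\[
2\scal{y_n-w_n}{y_n-p}-2\lambda_n\scal{s_n-r_n}{y_n-p}=2\scal{y_n-w_n-\lambda_n(By_n-r_n)}{y_n-p}+2\lambda_n\scal{By_n-s_n}{y_n-p},
\]
which holds because $(By_n-r_n)-(By_n-s_n)=s_n-r_n$. This reproduces verbatim the last two inner-product terms of \eqref{elm1} with no inequality incurred.

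Finally, I would decompose the quadratic residual by inserting the intermediate points $By_n$ and $Bw_n$ in two stages: first
\[
\|s_n-r_n\|^2=\|s_n-By_n\|^2+2\scal{s_n-By_n}{By_n-r_n}+\|By_n-r_n\|^2,
\]
and then expanding $\|By_n-r_n\|^2=\|By_n-Bw_n\|^2+2\scal{By_n-Bw_n}{Bw_n-r_n}+\|Bw_n-r_n\|^2$. After multiplying by $\lambda_n^2$, every resulting term matches a term on the right-hand side of \eqref{elm1} except $\lambda_n^2\|By_n-Bw_n\|^2$, which I would bound using the $L$-Lipschitz continuity of $B$ from (A1) by $\lambda_n^2L^2\|y_n-w_n\|^2=\lambda_n^2L^2\|w_n-y_n\|^2$; combined with the $-\|w_n-y_n\|^2$ term already present, this yields the coefficient $-(1-\lambda_n^2L^2)$ and completes the estimate. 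The only genuine inequality in the whole argument is this single Lipschitz bound, so the main obstacle is not conceptual but organizational: keeping the many cross terms correctly grouped, in particular recognizing that $By_n$ and $Bw_n$ are the right intermediate insertions to make the decomposition align with the stated right-hand side.
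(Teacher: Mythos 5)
Your proposal is correct and follows essentially the same route as the paper: both reduce to the identity $\|x_{n+1}-p\|^2=\|w_n-p\|^2-\|w_n-y_n\|^2+\lambda_n^2\|s_n-r_n\|^2+2\scal{y_n-w_n-\lambda_n(By_n-r_n)}{y_n-p}+2\lambda_n\scal{By_n-s_n}{y_n-p}$ (you expand around $y_n-p$, the paper around $w_n-p$, a purely cosmetic difference), then perform the same two-stage insertion of $By_n$ and $Bw_n$ into $\|s_n-r_n\|^2$ and apply the single Lipschitz bound. Your side remark that the estimate needs neither monotonicity nor $p\in\mathcal{P}$ is also accurate.
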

\begin{proof}We have
    	\begin{align}\label{eq15}
	\|x_{n+1}-p\|^2&=\|y_n-\lambda_n(s_n-r_n)-p\|^2 \notag \\
	&=\|y_n-w_n-\lambda_n(s_n-r_n)+w_n-p\|^2\notag \\
	&=\|w_n-p\|^2+\|y_n-w_n-\lambda_n(s_n-r_n)\|^2+2 \scal{y_n-w_n-\lambda_n(s_n-r_n)}{w_n-p} \notag \\
	&=\|w_n-p\|^2+\|y_n-w_n-\lambda_n(s_n-r_n)\|^2+2\scal{y_n-w_n-\lambda_n(s_n-r_n)}{w_n-y_n} \notag \\
	&\hskip7.2cm+2\scal{y_n-w_n-\lambda_n(s_n-r_n)}{y_n-p} \notag \\
	&=\|w_n-p\|^2+\|y_n-w_n-\lambda_n(s_n-r_n)\|^2-2\|w_n-y_n\|^2+2\lambda_n \scal{s_n-r_n}{y_n-w_n} \notag \\
	&\hskip7.2cm+2\scal{y_n-w_n-\lambda_n(s_n-r_n)}{y_n-p}  \notag\\
	&=\|w_n-p\|^2-\|w_n-y_n\|^2+\lambda_n^2\|s_n-r_n\|^2\notag\\
	&\hskip1.5cm +2 \scal{y_n-w_n-\lambda_n (By_n-r_n)}{y_n-p} +2\lambda_n \scal{By_n-s_n}{y_n-p}.
	\end{align}
	Note that
	\begin{align}\label{eq16}
	    \|s_n-r_n\|^2&=\|s_n-By_n+By_n-r_n\|^2 \notag \\
	    &=\|s_n-By_n\|^2+2\scal{s_n-By_n}{By_n-r_n} +\|By_n-Bw_n+Bw_n-r_n\|^2 \notag \\
	    &=\|s_n-By_n\|^2+2\scal{s_n-By_n}{By_n-r_n}+\|By_n-Bw_n\|^2+\|Bw_n-r_n\|^2\notag \\
	    &\ \ \ +2\scal{By_n-Bw_n}{Bw_n-r_n} \notag\\
	    &\le \|s_n-By_n\|^2+2\scal{s_n-By_n}{By_n-r_n}+L^2\|y_n-w_n\|^2+\|Bw_n-r_n\|^2\notag \\
	    &\ \ \ +2\scal{By_n-Bw_n}{Bw_n-r_n}
	\end{align}
By combining (\ref{eq15}) and (\ref{eq16}) we obtain
\begin{align*}
\|x_{n+1}-p\|^2 &\le \|w_n-p\|^2-(1- \lambda_n ^2 L^2) \|w_n-y_n\|^2+ \lambda_n^2 \big( \|s_n-By_n\|^2+\|r_n-Bw_n\|^2\big)\notag \\
&\hskip2cm\ +2\lambda_n^2 \big(\scal{s_n-By_n}{By_n-r_n}+\scal{By_n-Bw_n}{Bw_n-r_n} \big) \notag \\
&\hskip2cm\ +2 \scal{y_n-w_n-\lambda_n (By_n-r_n)}{y_n-p} +2\lambda_n \scal{By_n-s_n}{y_n-p}.
\end{align*}

The proof is complete.
\end{proof}
\begin{theorem} \label{thm1}
 Let $(x_n)_{n \in \NN}$ be generated by Algorithm \ref{alg1}. The followings hold
 \begin{enumerate}
\item \label{tr1i}
Assume that $(\lambda_n)_{n \in \NN}$ be a sequence in $\left]\epsilon, \dfrac{1-\epsilon}{L}\right[$ and the following conditions are satisfied for $\EuScript{F}_n=\sigma(x_{-1},x_0,r_0,x_1,r_1,\ldots,x_n,r_n)$
\begin{equation}\label{e:sum}
 \sum_{n\in\mathbb{N}}\mathsf{E}[\|s_n-By_n\|^2|\EuScript{F}_n]<
 +\infty\ \  a.s..\ \ \text{and}\ \ \sum_{n\in\mathbb{N}}\|r_n-Bw_n\|^2<+\infty \ \ a.s.
 \end{equation}
 Then  $(x_n)_{n \in \NN}$ converges weakly to a random varibale $\overline{x}\colon\Omega\to\zer(A+B)$ a.s..
 \item\label{tr1ii} Suppose that $A$ or $B$ is uniformly monotone. Let $(\lambda_n)_{n \in \NN}$ be a sequence in $\left]0, +\infty \right[$ such that $ (\lambda_n)_{n \in \NN}\in\ell_{2}(\NN)\backslash\ell_1(\NN)$ and
 \begin{equation}
     \sum_{n \in \NN}\lambda^{2}_n \|r_n-Bw_n\|^2 < \infty\ \ a.s\ \  \text{and}\ \ \sum_{n\in\mathbb{N}}\lambda_n^2\mathsf{E}[\|s_n-By_n\|^2|\EuScript{F}_n]<
 +\infty\ \   \text{a.s}., \label{dkv}
 \end{equation}
 where $(\forall p\in \left]0,\infty\right[)\;\ell_p(\mathbb N)=\menge{(\lambda_n)_{n\in\NN}}{(\forall n\in\NN)\; \lambda_n\in\RR, \sum \limits_{n \in \mathbb N} |\lambda_n|^p<+\infty }.$
 Then $(x_n)_{n \in \NN}$ converges strongly to a unique solution $\overline{x}$ a.s.. 
 \end{enumerate}
\end{theorem}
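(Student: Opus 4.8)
The plan is to take the conditional expectation of the estimate in Lemma~\ref{lm1add} given $\EuScript{F}_n$, exploit that $w_n$ and $y_n$ are $\EuScript{F}_n$-measurable while $\mathsf{E}[s_n\mid\EuScript{F}_n]=By_n$, and then feed the resulting recursion into the Robbins--Siegmund lemma (Lemma~\ref{lm1}). First I would observe that since $w_n=x_n+\alpha_n(x_n-x_{n-1})$ and $y_n=(\Id+\lambda_n A)^{-1}(w_n-\lambda_n r_n)$ are built from $\EuScript{F}_n$-measurable data, the two inner-product terms $2\lambda_n^2\scal{s_n-By_n}{By_n-r_n}$ and $2\lambda_n\scal{By_n-s_n}{y_n-p}$ vanish under $\mathsf{E}[\,\cdot\mid\EuScript{F}_n]$. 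The crucial algebraic identity is that, writing $a_n:=\lambda_n^{-1}(w_n-\lambda_n r_n-y_n)\in Ay_n$, one has $y_n-w_n-\lambda_n(By_n-r_n)=-\lambda_n(a_n+By_n)$, so that the last retained term equals $-2\lambda_n\scal{a_n+By_n}{y_n-p}$. Splitting $\scal{a_n+By_n}{y_n-p}=\scal{a_n+Bp}{y_n-p}+\scal{By_n-Bp}{y_n-p}$ and using $-Bp\in Ap$ together with monotonicity of $A$ and $B$ shows this quantity is nonnegative; when $A$ or $B$ is uniformly monotone it is bounded below by $\phi(\|y_n-p\|)$, where $\phi\in\{\phi_A,\phi_B\}$ denotes the modulus of the uniformly monotone operator.

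For part~\ref{tr1i}, I would control the inertial term by $\|w_n-p\|^2\le(1+\epsilon_n)\|x_n-p\|^2+\epsilon_n+\epsilon_n^2$ (using $\alpha_n\|x_n-x_{n-1}\|\le\epsilon_n$ from \eqref{hqal} and $2\|x_n-p\|\le1+\|x_n-p\|^2$), and absorb the cross term $2\lambda_n^2\scal{By_n-Bw_n}{Bw_n-r_n}$ into the coercive term via Young's inequality and $\|By_n-Bw_n\|\le L\|w_n-y_n\|$; because $\lambda_nL\le1-\epsilon$, the coefficient of $\|w_n-y_n\|^2$ stays negative and bounded away from $0$. This produces $\mathsf{E}[\|x_{n+1}-p\|^2\mid\EuScript{F}_n]\le(1+\epsilon_n)\|x_n-p\|^2+\beta_n-\theta_n$ with $\beta_n$ summable a.s. (by \eqref{e:sum} and $\sum\epsilon_n<\infty$) and $\theta_n=c\|w_n-y_n\|^2$. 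Lemma~\ref{lm1} then yields that $\|x_n-p\|$ converges a.s. for every $p\in\mathcal P$ and that $\|w_n-y_n\|\to0$ a.s. Since $\|x_n-w_n\|\le\epsilon_n\to0$, the sequences $(x_n),(w_n),(y_n)$ share the same weak cluster points. To identify them I would set $u_n:=a_n+By_n\in(A+B)y_n$ and use $u_n=\lambda_n^{-1}(w_n-y_n)+(By_n-Bw_n)+(Bw_n-r_n)$ together with $\lambda_n>\epsilon$, $\|w_n-y_n\|\to0$ and $\|Bw_n-r_n\|\to0$ to get $u_n\to0$ strongly; the demiclosedness of the maximal monotone operator $A+B$ (maximal by Lemma~\ref{2lv1}) forces every weak cluster point into $\zer(A+B)$. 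Lemma~\ref{lm2} then gives weak a.s. convergence to a $\mathcal P$-valued vector.

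For part~\ref{tr1ii}, uniqueness of the solution follows from uniform monotonicity of $A+B$. Here $\lambda_n\to0$ (as $(\lambda_n)\in\ell_2$), so $1-2\lambda_n^2L^2\to1$ and, after using $2\lambda_n^2|\scal{By_n-Bw_n}{Bw_n-r_n}|\le\lambda_n^2L^2\|w_n-y_n\|^2+\lambda_n^2\|Bw_n-r_n\|^2$, the recursion becomes, for $n$ large, $\mathsf{E}[\|x_{n+1}-p\|^2\mid\EuScript{F}_n]\le(1+\epsilon_n)\|x_n-p\|^2+\beta_n-(1-2\lambda_n^2L^2)\|w_n-y_n\|^2-2\lambda_n\phi(\|y_n-p\|)$, with $\beta_n$ summable a.s. by \eqref{dkv}. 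Robbins--Siegmund gives that $\|x_n-p\|$ converges a.s., that $\|w_n-y_n\|\to0$ a.s., and that $\sum_n\lambda_n\phi(\|y_n-p\|)<\infty$ a.s. Since $(\lambda_n)\notin\ell_1$, this forces $\liminf_n\phi(\|y_n-p\|)=0$, whence $\liminf_n\|y_n-p\|=0$ because $\phi$ is increasing and vanishes only at $0$. Combining $\|x_n-y_n\|\le\epsilon_n+\|w_n-y_n\|\to0$ with the a.s. convergence of $\|x_n-p\|$ and $\liminf_n\|x_n-p\|=0$ yields $x_n\to p$ strongly a.s.

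The main obstacle is the simultaneous bookkeeping of the inertial perturbation and the stochastic noise: one must arrange the recursion in the exact multiplicative-plus-summable form $(1+\gamma_n)z_n+\beta_n-\theta_n$ required by Lemma~\ref{lm1}, which is why $2\|x_n-p\|\le1+\|x_n-p\|^2$ is used to turn $2\epsilon_n\|x_n-p\|$ into the harmless terms $\epsilon_n\|x_n-p\|^2+\epsilon_n$, and why the coefficient of $\|w_n-y_n\|^2$ must be kept strictly negative (using $\lambda_nL\le1-\epsilon$ in part~\ref{tr1i} and $\lambda_n\to0$ in part~\ref{tr1ii}). A second delicate point is passing from a fixed-$p$ a.s. statement to the weak-convergence conclusion; this is precisely what Lemma~\ref{lm2} is invoked for, rather than arguing along a single random subsequence.
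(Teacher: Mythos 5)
Your proposal is correct and follows essentially the same route as the paper's proof: conditional expectation of the estimate in Lemma \ref{lm1add} (the noise cross-terms vanish since $\mathsf{E}[s_n|\EuScript{F}_n]=By_n$ and $y_n,w_n,r_n$ are $\EuScript{F}_n$-measurable), the monotonicity bound $\scal{y_n-w_n-\lambda_n(By_n-r_n)}{y_n-p}\le-\lambda_n\phi(\|y_n-p\|)$, the inertial estimate $\|w_n-p\|^2\le(1+\epsilon_n)\|x_n-p\|^2+\epsilon_n+\epsilon_n^2$, Young's inequality on the cross term, Robbins--Siegmund, and then demiclosedness of $\gra(A+B)$ plus Lemma \ref{lm2} for part (i), respectively the $\liminf$ argument via $\sum_n\lambda_n\phi(\|y_n-p\|)<\infty$ and $(\lambda_n)\notin\ell_1$ for part (ii). The only (cosmetic) difference is that you identify weak cluster points directly from the inclusion $u_n=\lambda_n^{-1}(w_n-y_n)+(By_n-Bw_n)+(Bw_n-r_n)\in(A+B)y_n$ with $u_n\to 0$ strongly, whereas the paper first introduces the auxiliary noiseless sequence $z_n=(I+\lambda_n A)^{-1}(w_n-\lambda_n Bw_n)$ and argues with $z_n$; both rest on exactly the same weak-to-strong closedness of the graph of the maximal monotone operator $A+B$.
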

\begin{proof}
From Lemma \ref{lm1add}, taking conditional expectation given $\EuScript{F}_n$ on both sides of \eqref{elm1}), using $\mathsf{E}[s_n|{\EuScript{F}_n}]=By_n$ we get
\begin{align}
     \mathsf{E}[\|x_{n+1}-p\|^2|\EuScript{F}_n] &\le \|w_n-p\|^2-(1- \lambda_n ^2 L^2) \|w_n-y_n\|^2+\lambda_n^2 \E[\|s_n-By_n\|^2|\EuScript F_n]+\lambda_n^2\|r_n-Bw_n\|^2 \notag \\
     &\ +2 \lambda_n^2 \scal{By_n-Bw_n}{Bw_n-r_n} + 2\scal{y_n-w_n-\lambda_n (By_n-r_n)}{y_n-p}.
      \label{ethr1add}
\end{align}
     Since $y_n=(I+\lambda A)^{-1}(w_n-\lambda_n r_n)$, we obtain
     $$\dfrac{w_n-y_n}{\lambda_n}-r_n \in Ay_n$$
     which is equivalent to
     $$\dfrac{w_n-y_n}{\lambda_n}-r_n+By_n \in (A+B)y_n.$$
     We have $0 \in (A+B)p$, using the uniformly monotone of $A+B$, we get
     \begin{align}
        \scal{\dfrac{w_n-y_n}{\lambda_n}-r_n+By_n}{y_n-p} \ge  \phi(\|y_n-p\|),
     \end{align}
     which implies 
     \begin{align}
         \scal{y_n-w_n-\lambda_n (By_n-r_n)}{ y_n-p}  \le -\lambda_n  \phi(\|y_n-p\|).\label{inen2}
     \end{align}
     	   Using \eqref{hqal} and Cauchy-Schwarz inequality, we estimate the term $\|w_n-p\|^2$ in (\ref{ethr1add})  as follows:
	  	\begin{align}
	\|w_n-p\|^2&=\|x_n+\alpha_n(x_n-x_{n-1})-p\|^2\notag\\ 
	& =\|x_n-p\|^2+2\alpha_n \scal{x_n-p}{x_n-x_{n-1}}  +\alpha_n^2 \|x_n-x_{n-1}\|^2\notag\\
	& \le \|x_n-p\|^2+2\alpha_n \|x_n-p\|\|x_n-x_{n-1}\|+\alpha_n^2\|x_n-x_{n-1}\|^2\notag\\
	&\le \|x_n-p\|^2+2\epsilon_n\|x_n-p\|+\epsilon_n^2\notag\\
	&\le (1+\epsilon_n)\|x_n-p\|^2+\epsilon_n^2+\epsilon_n \ .\label{eq21}
	\end{align}
	Therefore, from \eqref{ethr1add}, using \eqref{inen2} and \eqref{eq21}, we derive
	\begin{align}\label{eq20}
	  \mathsf{E}[\|x_{n+1}&-p\|^2|\EuScript{F}_n] \le (1+\epsilon_n)\|x_n-p\|^2-(1- \lambda_n ^2 L^2) \|w_n-y_n\|^2+ \lambda_n^2 \E[\|s_n-By_n\|^2|\EuScript F_n]\notag \\
	  & +\lambda_n^2\|r_n-Bw_n\|^2 +2 \lambda_n^2 \scal{By_n-Bw_n}{Bw_n-r_n}-2 \lambda_n\phi(\|y_n-p\|)+\epsilon_n^2+\epsilon_n.
	  	  \end{align}
	  	  (i) In general case, i.e. $\phi=0$.   We have 
\begin{align}
     2\scal{By_n-Bw_n}{Bw_n-r_n}&\le 2\|By_n-Bw_n\|\|Bw_n-r_n\| \notag \\
     &\le \dfrac{\epsilon}{1-\epsilon}\|By_n-Bw_n\|^2+\dfrac{1-\epsilon}{\epsilon}\|r_n-Bw_n\|^2\notag \\
     &\le \dfrac{\epsilon}{1-\epsilon}L^2\|y_n-w_n\|^2+\dfrac{1-\epsilon}{\epsilon}\|r_n-Bw_n\|^2. \label{inen1}
\end{align}
Hence, \eqref{eq20} implies that
	  	\begin{align}\label{eq201}
	  \mathsf{E}[\|x_{n+1}-p\|^2|\EuScript{F}_n] &\le (1+\epsilon_n)\|x_n-p\|^2-(1- \lambda_n ^2 L^2(1+\frac{\epsilon}{1-\epsilon}) \|w_n-y_n\|^2+ \lambda_n^2 \E[\|s_n-By_n\|^2|\EuScript F_n]\notag \\
	  &\ \ \ +\lambda_n^2(1+\frac{1-\epsilon}{\epsilon})\|r_n-Bw_n\|^2+\epsilon_n^2+\epsilon_n \notag \\
	  &\le (1+\epsilon_n)\|x_n-p\|^2-\epsilon \|w_n-y_n\|^2+ \lambda_n^2 \E[\|s_n-By_n\|^2|\EuScript F_n]+\frac{\lambda_n^2}{\epsilon}\|r_n-Bw_n\|^2 \notag\\
	  &\ \ \ +\epsilon_n^2+\epsilon_n.
	  \end{align}
	
	 We have that $\sum_{n=1}^{\infty}\epsilon_n<\infty$ which implies $\sum_{n=1}^{\infty}\epsilon_n^2<\infty$. Therefore, using the conditions in Theorem \ref{thm1} and Lemma \ref{lm1}, \eqref{eq201} implies that
	  $$ \|x_n-p\| \ \ \text{converges and} \ \|w_n-y_n\| \to 0\ \   \text{a.s}..$$
	  We have \begin{align}
	      \|x_n-y_n\| &\le \|x_n-w_n\|+\|w_n-y_n\| \notag \\
	      & \le \alpha_n \|x_n-x_{n-1}\|+\|w_n-y_n\| \to 0. \label{cvn1}
	  \end{align}
	  Let us set 
\begin{equation}
z_n=(I+\lambda_n A)^{-1}(w_n-\lambda_n Bw_n).
\end{equation}
Then, since $J_{\lambda_n A}$ is nonexpansive, we have 
\begin{equation}\label{e:d5}
\|y_n-z_n\| \le \lambda_n \|Bw_n-r_n\| \to 0\; \text{a.s}..
\end{equation}
Hence \begin{align}
    \|w_n-z_n\|\le\|w_n-y_n\|+\|y_n-z_n\| \to 0 \ \ \text{a.s}..
\end{align}
Let $x^*$ be a weak cluster point of $(x_n)_{n \in \NN}$. 
Then, there exists a subsequence $(x_{n_k})_{k \in \mathbb N} $ which converges weakly to $x^*$ a.s.. By \eqref{cvn1}, $ y_{n_k} \rightharpoonup x^* $ a.s..
It follows from
$y_{n_k} \rightharpoonup x^*$ that $ z_{n_k} \rightharpoonup x^*$.
Since 	$z_{n_k}=(I+\gamma_{n_k}A)^{-1}(w_{n_k}-\gamma_{n_k}B w_{n_k})$, we have
	\begin{equation}
	\frac{ w_{n_k}-z_{n_k}}{\gamma_{n_k}}- B w_{n_k} + B z_{n_k}\in (A+B)z_{n_k}.
	\end{equation}
Since $B$ is $L$-Lipschitz and $(\lambda_n)_{n \in \NN}$ is bounded away from $0$, it follows that 
\begin{equation}
\frac{ w_{n_k}-z_{n_k}}{\gamma_{n_k}}- B w_{n_k} + B z_{n_k} \to 0 \quad{a.s..}
\end{equation}
Using  \cite[Corollary 25.5]{Combet2017}, the sum $A+B$ is  maximally monotone and hence, its graph is closed in $\HH^{weak}\times\HH^{strong}$ \cite[Proposition 20.38]{Combet2017}. 
Therefore, $0\in (A+B) x^*$ a.s., that is $x^*\in \mathcal{P}$ a.s. By Lemma \ref{lm2}, the sequence $(x_n)_{n \in \NN}$ converges weakly to $\bar x \in \mathcal{P}$ a.s. and the proof is completed.

(ii) In case $A+B$ is uniform monotone.\\
We rewrite \eqref{eq20} as
	\begin{align}\label{eq202}
	  \mathsf{E}[\|x_{n+1}-p\|^2|\EuScript{F}_n] &\le (1+\epsilon_n)\|x_n-p\|^2-(1- \lambda_n ^2 L^2) \|w_n-y_n\|^2+ \lambda_n^2 \E[\|s_n-By_n\|^2|\EuScript F_n]\notag \\
	  &\ \ +\lambda_n^2\|r_n-Bw_n\|^2 +2 \lambda_n^2 \scal{By_n-Bw_n}{Bw_n-r_n}-2 \lambda_n\phi(\|y_n-p\|)\notag \\
	  &\ \ +\epsilon_n^2+\epsilon_n.
	  	  \end{align}
	  	  We have 
	  	  \begin{align} \label{ineB2}
	  	     2 \scal{By_n-Bw_n}{Bw_n-r_n} &\le \|By_n-Bw_n\|^2+\|r_n-Bw_n\|^2 \notag\\
	  	     &\le L^2 \|y_n-w_n\|^2+\|r_n-Bw_n\|^2
	  	  \end{align}
Using \eqref{ineB2}, from \eqref{eq202} we have  
	  	  \begin{align} \label{ineE22}
	  	    \mathsf{E}[\|x_{n+1}-p\|^2|\EuScript{F}_n] &\le (1+\epsilon_n)\|x_n-p\|^2-(1- 2\lambda_n ^2 L^2) \|w_n-y_n\|^2+ \lambda_n^2 \E[\|s_n-By_n\|^2|\EuScript F_n]\notag \\
	  &\ \ \ +2\lambda_n^2\|r_n-Bw_n\|^2-2 \lambda_n\phi(\|y_n-p\|)+\epsilon_n^2+\epsilon_n.  
	  	  \end{align}
	  	  From $  \sum_{n \in \NN} \lambda_n^2 <+ \infty$, we derive $\lim \limits_{n \to +\infty} \lambda_n=0$. We have that
	  	  \begin{align}
	  	      \begin{cases}
	  	              \sum_{n \in \NN} \epsilon_n<+\infty\\
	  	              \sum_{n \in \NN} \lambda_n^2 \E[\|s_n-By_n\|^2|\EuScript F_n]<+\infty \ \ a.s.\\
	  	              \sum_{n \in \NN}\lambda_n^2\|r_n-Bw_n\|^2<+\infty \ \ a.s.. 
	  	      \end{cases}
	  	  \end{align}
	  	  Therofore \eqref{ineE22} and Lemma \ref{lm1} imply
	  	  \begin{align}
	  \|x_n-p\| \ \text{converges} \ \ \text{and}\ 	     \sum \limits_{n \in \NN} \lambda_n \phi( \|y_n-p\|)<+\infty \ \ a.s.. 
	  	  \end{align}

Since $  \sum_{n \in \NN} \lambda_n = \infty$, it follows from $ \sum \limits_{n \in \NN} \lambda_n \phi( \|y_n-p\|)<+\infty$ that
$\liminf_{n\to\infty} \phi(\|y_n-p\|)=0$.  Thus, there exists a subsequence $\{n_k\}_{n \in \NN}$ such that $\|y_{n_k}-p\| \to 0$. It follows from \eqref{cvn1} that $\|x_{n_k}-p\| \to 0$ a.s.. Therefore, we infer that $\|x_{n}-p\| \to 0$ a.s.. This completes the proof.
\end{proof}

\begin{remark}\upshape
	With respect to Theorem \ref{thm1}, we observe the following.
	\begin{enumerate}
	\item [{\upshape(1)}]	The conditions in \eqref{dkv} are satisfied if sequences $(\|r_n-Bw_n\|^2)_{n \in \NN}$, $(\mathsf{E}[\|s_n-By_n\|^2|\EuScript{F}_n])_{n \in \NN}$ are bounded.
		\item [{\upshape(2)}] Theorem \ref{thm1} removes the assumption (iii) of Theorem 3.2 in \cite{bang1}, i.e. $\sup_{n \in \NN} \|x_n-x_{n-1}\|^2<+\infty$ and $\sum_{n \in \NN} \alpha_n<+\infty$. 
				\item [{\upshape(3)}] The algorithm (3.2) of \cite{bang6} is a particular case of our algorithm when $\alpha_n=0$. The condition \eqref{e:sum}, i.e. $\sum_{n\in\mathbb{N}}\mathsf{E}[\|s_n-By_n\|^2|\EuScript{F}_n]<
 +\infty \ \text{and}\ \ \sum_{n\in\mathbb{N}}\|r_n-Bw_n\|^2|<+\infty \ \ $ is weaker the conditions $\sum_{n\in\mathbb{N}}\sqrt{\mathsf{E}[\|s_n-By_n\|^2|\EuScript{F}_n]}<
 +\infty\ \  \text{and}\  \sum_{n\in\mathbb{N}}\sqrt{\|r_n-Bw_n\|^2}<+\infty \ \ $ of Theorem 3.1 in \cite{bang6}. 
 \item [{\upshape(4)}] For general case, i.e. $A$ is maximally monotone and $B$ is monotone and Lipschitz, the proposed algoritms in \cite{cui1,cui2} require a large number of samples in each iteration, all are unbiased estimates. However, Algorithm \ref{alg1} only requires $s_n$ is unbiased estimate. The range of the step size $\lambda_n$ in Theorem \ref{thm1} is more extended than that in Theorem 1 of \cite{cui1}. In case $r_n$ and $s_n$ is the average of samples as in \cite{cui1,cui2}, we can obtain the same results as there. 
					\end{enumerate}
\end{remark}
For the rate convergence in uniformly monotone case, we define the function
\begin{align}\label{phi}
    \varphi_c:\ ]0,+\infty[ \to \RR:\ t \mapsto \begin{cases}
            \dfrac{t^c-1}{c} \ &\text{if}\ c \neq 0\\
            \log t \ \  &\text{if}\ c=0.
    \end{cases} 
\end{align}
The following Lemma establishes a non asymptotic bound for numerical sequences satisfying a given recursive inequality. The proof is obtained similarly to the proof of Lemma 3.1 in \cite{bang2}.\\
\begin{lemma} \label{lmsn}
 Let $\alpha \in ]\frac{1}{2},1],\ \beta>1$. Let $a,b \in ]0,+\infty[,\ a \le \beta.$ Set $\alpha_n=\dfrac{a}{n^\alpha},\ \beta_n=\dfrac{b}{n^\beta}$. Let $(s_n)_{n \in \NN}$ be a sequence such that
 \begin{align}
     (\forall n \in \NN)\ \ 0 \le s_{n+1} \le (1-\alpha_n)s_n+\beta_n.
     \end{align}
   Let $n_0$ be the smallest integer such that, for every $n \ge n_0>1,$ it holds $\alpha_n<1,$ set $t=1-2^{\alpha-1} \ge 0.$ Then for every $n \ge 2n_0$, the followings hold
   \begin{enumerate}
       \item If $\alpha=1$, we get 
       \begin{align} \label{alpha11}
    s_{n+1} \le s_{n_0}\big(\dfrac{n_0}{n+1}\big)^a+\dfrac{b}{(n+1)^a}\big(1+\frac{1}{n_0}\big)^a \varphi_{a+1-\beta}(n).
\end{align}
\item If $1/2<\alpha<1$, we have
\begin{align}\label{alpha01}
    s_{n+1} \le \bigg(b\varphi_{1-\beta}(n)+s_{n_0} \text{exp}\bigg(\frac{an_0^{1-\alpha}}{1-\alpha}\bigg)\bigg)\text{exp}\bigg(\frac{-at(n+1)^{1-\alpha}}{1-\alpha}\bigg)+\dfrac{b 2^{\beta-\alpha}}{a  (n-2)^{\beta-\alpha}}.
\end{align}
   \end{enumerate}
\end{lemma}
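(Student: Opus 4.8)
The plan is to unroll the recursion into an explicit closed form and then control the resulting product and convolution sum by comparing discrete sums with integrals. First I would fix $n\ge n_0$ and iterate the hypothesis $s_{k+1}\le(1-\alpha_k)s_k+\beta_k$ downward from $k=n$ to $k=n_0$. Since $n_0$ is chosen so that $\alpha_k<1$ (hence $1-\alpha_k\ge 0$) for every $k\ge n_0$, multiplying through by these nonnegative factors preserves the inequalities, and one obtains
\begin{equation*}
s_{n+1}\le\Bigg(\prod_{k=n_0}^{n}(1-\alpha_k)\Bigg)s_{n_0}+\sum_{k=n_0}^{n}\beta_k\prod_{j=k+1}^{n}(1-\alpha_j).
\end{equation*}
Everything then reduces to estimating the homogeneous product (first term) and the convolution sum (second term). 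For both I would invoke the elementary bound $1-x\le e^{-x}$ to pass from $\prod_{j=k+1}^{n}(1-\alpha_j)$ to $\exp\big(-a\sum_{j=k+1}^{n}j^{-\alpha}\big)$, and then bound the partial sums $\sum_{j}j^{-\alpha}$ from below by $\int x^{-\alpha}\,dx$, which is legitimate because $x\mapsto x^{-\alpha}$ is decreasing.

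In the case $\alpha=1$ this is routine: the relevant integral is a logarithm, so the products become powers. The homogeneous term collapses to $\big(n_0/(n+1)\big)^a s_{n_0}$. For the sum term I would factor out $(n+1)^{-a}$, bound $(k+1)^a\le(1+1/n_0)^a k^a$ for $k\ge n_0$, and then recognize $\sum_{k}k^{a-\beta}\le\varphi_{a+1-\beta}(n)$ by viewing $\varphi_c$ as the antiderivative of $x^{c-1}$ normalized at $t=1$. Assembling these two bounds yields \eqref{alpha11}.

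The case $1/2<\alpha<1$ is where the real work lies. Here the integral comparison gives $\sum_{k=n_0}^{n}k^{-\alpha}\ge\big((n+1)^{1-\alpha}-n_0^{1-\alpha}\big)/(1-\alpha)$, so the homogeneous term is at most $s_{n_0}\exp\!\big(an_0^{1-\alpha}/(1-\alpha)\big)\exp\!\big(-a(n+1)^{1-\alpha}/(1-\alpha)\big)$. The restriction $n\ge 2n_0$ is precisely what delivers $(n+1)^{1-\alpha}\ge 2^{1-\alpha}n_0^{1-\alpha}$, hence $(n+1)^{1-\alpha}-n_0^{1-\alpha}\ge t\,(n+1)^{1-\alpha}$ with $t=1-2^{\alpha-1}>0$; this single inequality is the origin of both the factor $t$ and the exponential $\exp\!\big(-at(n+1)^{1-\alpha}/(1-\alpha)\big)$ appearing in \eqref{alpha01}. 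For the convolution sum I would split the summation range at a midpoint of order $(n+1)/2$. On the ``head'' (small $k$) the product $\prod_{j=k+1}^{n}(1-\alpha_j)$ is exponentially small, and pulling it out leaves $\sum_k\beta_k\le b\,\varphi_{1-\beta}(n)$, producing the first summand of \eqref{alpha01}; on the ``tail'' (large $k$) I would use the telescoping identity $\alpha_k\prod_{j=k+1}^{n}(1-\alpha_j)=\prod_{j=k+1}^{n}(1-\alpha_j)-\prod_{j=k}^{n}(1-\alpha_j)$ together with monotonicity of the ratio $\beta_k/\alpha_k=(b/a)k^{\alpha-\beta}$ to obtain the polynomially decaying summand $\tfrac{b\,2^{\beta-\alpha}}{a(n-2)^{\beta-\alpha}}$.

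The main obstacle I anticipate is not conceptual but the constant-tracking in this last case: choosing the split point so that the head inequality matches the same exponent $t$ used in the homogeneous term, and so that the tail, via the telescoping bound $\sum_k\alpha_k\prod_{j>k}(1-\alpha_j)\le 1$, produces exactly the stated $2^{\beta-\alpha}$ and $(n-2)^{\beta-\alpha}$. Keeping all of these integral estimates and the $n\ge 2n_0$ threshold consistent is the delicate part; the rest follows the template of Lemma 3.1 in \cite{bang2}.
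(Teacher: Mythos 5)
Your proposal is correct and follows essentially the same route as the paper's proof: unroll the recursion into the homogeneous product plus convolution sum, apply $1-x\le e^{-x}$ with integral comparison (encoded via $\varphi_c$), and, in the case $1/2<\alpha<1$, split the sum at a midpoint of order $n/2$ so the head carries the exponent $t=1-2^{\alpha-1}$ while the tail is handled by the telescoping bound $\sum_k \alpha_k\prod_{j>k}(1-\alpha_j)\le 1$ together with the monotonicity of $\beta_k/\alpha_k$. The only cosmetic difference is where the factor $t$ enters the homogeneous term (you extract it from $n\ge 2n_0$ directly, the paper simply uses $t\le 1$ there), and both yield the stated bound.
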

\begin{proof}
We recall the definition of $\varphi_c$ in \eqref{phi}. Note that, $\varphi_c$ is a increasing function and for $\delta \ge 0$, $2 \le m \le n$, we get:
\begin{align}
    \varphi_{1-\delta}(n+1)-\varphi_{1-\delta}(m) \le \sum_{k=m}^n k^{-\delta} \le \varphi_{1-\delta}(n) . \label{ineint}
\end{align}
We have 
\begin{align}\label{ines}
    s_{n+1} \le s_{n_0}\prod_{k=n_0}^n (1-\alpha_n)+\sum_{k=n_0}^n\prod_{i=k+1}^n(1-\alpha_i)\beta_k.
\end{align}
Let us estimate the first term in the right hand side of \eqref{ines}. Using \eqref{ineint} and the inequality $1-x \le e^{-x}\ \forall \ x\in \RR$, we have 
\begin{align}\label{alpha=1}
    \prod_{k=n_0}^n (1-\alpha_n)&=\prod_{k=n_0}^n (1-ak^{-\alpha}) \le e^{-a \sum \limits_{k=n_0}^n k^{-\alpha}} \notag\\
    &\le \begin{cases}
            \big(\dfrac{n_0}{n+1}\big)^a \ &\text{if}\ \alpha=1\\
            \text{exp}\big(\frac{a}{1-\alpha}(n_0^{1-\alpha}-(n+1)^{1-\alpha})\big) \ &\text{if} \ \frac{1}{2}<\alpha <1.
    \end{cases}
\end{align}
To estimate the second term in the right hand side of \eqref{ines}, let us consider firstly the case $\alpha=1$. We have
\begin{align}\label{alpha=12}
    \sum_{k=n_0}^n\prod_{i=k+1}^n(1-\alpha_i)\beta_k &\le \sum_{k=n_0}^n e^{-a \sum \limits_{i=k+1}^n i^{-\alpha}}\beta_k \notag \\
    &\le \sum_{k=n_0}^n \big(\dfrac{k+1}{n+1}\big)^a\dfrac{b}{k^\beta}=\dfrac{b}{(n+1)^a}\sum_{k=n_0}^n (1+\frac{1}{k})^a k^{a-\beta}\notag \\
    &\le \dfrac{b}{(n+1)^a}\big(1+\frac{1}{n_0}\big)^a \varphi_{a+1-\beta}(n).
\end{align}
From \eqref{ines}, using \eqref{alpha=1} and \eqref{alpha=12}, for $\alpha=1$, we get
\begin{align}
    s_{n+1} \le s_{n_0}\big(\dfrac{n_0}{n+1}\big)^a+\dfrac{b}{(n+1)^a}\big(1+\frac{1}{n_0}\big)^a \varphi_{a+1-\beta}(n).
\end{align}
We next estimate the second term in the right hand side of \eqref{ines} in case $1/2<\alpha<1$. Let $m \in \NN$ such that $n_0 \le n/2 \le m+1 \le (n+1)/2.$ We have
\begin{align}
    \sum_{k=n_0}^n&\prod_{i=k+1}^n(1-\alpha_i)\beta_k=\sum_{k=n_0}^m\prod_{i=k+1}^n(1-\alpha_i)\beta_k+\sum_{k=m+1}^n\prod_{i=k+1}^n(1-\alpha_i)\beta_k \notag \\
    &\le \text{exp}\big(-\sum_{i=m+1}^n \alpha_i\big)\sum_{k=n_0}^m \beta_k+\dfrac{b}{a \cdot m^{\beta-\alpha}}\sum_{k=m+1}^n\prod_{i=k+1}^n(1-\alpha_i)\alpha_n \notag \\
    &\le b \cdot \text{exp}\big(-a\sum_{i=m+1}^n i^{-\alpha}\big)\sum_{k=n_0}^m k^{-\beta}+\dfrac{b}{a \cdot m^{\beta-\alpha}}\sum_{k=m+1}^n\big(\prod_{i=k+1}^n(1-\alpha_i)-\prod_{i=k}^n(1-\alpha_i)\big) \notag \\
    &\le b \cdot \text{exp}\big(\frac{a}{1-\alpha}((m+1)^{1-\alpha}-(n+1)^{1-\alpha}) \big)\varphi_{1-\beta}(n)+\dfrac{b 2^{\beta-\alpha}}{a  (n-2)^{\beta-\alpha}} \notag\\
    &\le b \cdot \text{exp}\big(\frac{-at(n+1)^{1-\alpha}}{1-\alpha} \big)\varphi_{1-\beta}(n)+\dfrac{b 2^{\beta-\alpha}}{a  (n-2)^{\beta-\alpha}}. \label{alpha<1}
\end{align}
Combing \eqref{alpha=1} and \eqref{alpha<1}, for $1/2<\alpha<1$, we have
\begin{align}
    s_{n+1} &\le s_{n_0}\text{exp}\big(\frac{a}{1-\alpha}(n_0^{1-\alpha}-(n+1)^{1-\alpha})\big)+b \cdot \text{exp}\big(\frac{-at(n+1)^{1-\alpha}}{1-\alpha} \big)\varphi_{1-\beta}(n)+\dfrac{b 2^{\beta-\alpha}}{a  (n-2)^{\beta-\alpha}} \notag \\
    &\le \bigg(b\varphi_{1-\beta}(n)+s_{n_0} \text{exp}\bigg(\frac{an_0^{1-\alpha}}{1-\alpha}\bigg)\bigg)\text{exp}\bigg(\frac{-at(n+1)^{1-\alpha}}{1-\alpha}\bigg)+\dfrac{b 2^{\beta-\alpha}}{a  (n-2)^{\beta-\alpha}}.
\end{align}
\end{proof}

\begin{theorem}\label{t:2}
Suppose that $\AAA$ or $B$ is $\mu$-strongly monotone. For $\alpha \in ]1/2,1],\ a>0$, define 
\begin{equation}\label{e:step}
  (\forall n \in \NN)\quad  \lambda_n = \frac{4a}{\mu n^\alpha}.
\end{equation}
Suppose that there exist constants $c$ and $\theta>1$ such that 
\begin{equation}
(\forall n\in\NN)\; \mathsf{E}[(2\|r_n-Bw_n\|^2+\|s_n-By_n\|^2)|\EuScript F_n]  \leq c \ \ a.s. 
\end{equation}
 and $\epsilon_n=\mathcal{O} (n^{-\theta})$. Set $\beta=\min \{2\alpha,\theta\}$, assume that $a \le \beta$. Then
 \begin{align} \label{rate}
	  	  	       \E[\|x_n-p\|^2]=\begin{cases} \mathcal{O}(n^{\alpha-\beta}) \ &\text{if}\ 1/2<\alpha<1,\\
	  	  	           \mathcal{O}(n^{-a})+\mathcal{O}(n^{1-\beta}) \ &\text{if}\ \alpha=1,\ a \neq \beta-1,\\
	  	  	              \mathcal{O}(n^{-a})+\mathcal{O}(\frac{\ln n}{n^a}) \ &\text{if}\ \alpha=1,\ a=\beta-1.
	  	  	  	      \end{cases}
	  	  	  	  \end{align}
\end{theorem}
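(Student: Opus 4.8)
The plan is to reduce everything to the scalar recursion governed by Lemma \ref{lmsn}, applied to $s_n=\E[\|x_n-p\|^2]$, where $p$ is the (unique, by strong monotonicity) solution. Since $\AAA$ or $B$ is $\mu$-strongly monotone, $A+B$ is $\mu$-strongly monotone, so in inequality \eqref{ineE22} we may take $\phi(\|y_n-p\|)=\mu\|y_n-p\|^2$. First I would note that $\|r_n-Bw_n\|^2$ is $\EuScript{F}_n$-measurable, so the two stochastic terms combine into $\lambda_n^2\,\E[(2\|r_n-Bw_n\|^2+\|s_n-By_n\|^2)|\EuScript{F}_n]\le c\lambda_n^2$ by hypothesis. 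This turns \eqref{ineE22} into, almost surely,
\begin{align*}
\E[\|x_{n+1}-p\|^2|\EuScript{F}_n] &\le (1+\epsilon_n)\|x_n-p\|^2-(1-2\lambda_n^2L^2)\|w_n-y_n\|^2\\
&\quad +c\lambda_n^2-2\lambda_n\mu\|y_n-p\|^2+\epsilon_n^2+\epsilon_n.
\end{align*}

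The crux of the argument, and the step I expect to be the main obstacle, is converting the strong-monotonicity term $-2\lambda_n\mu\|y_n-p\|^2$ into a genuine contraction on the iterate $x_n$. I would lower-bound $\|y_n-p\|^2$ using $\|x_n-p\|^2\le 2\|x_n-y_n\|^2+2\|y_n-p\|^2$ together with $\|x_n-y_n\|^2\le 2\|x_n-w_n\|^2+2\|w_n-y_n\|^2\le 2\epsilon_n^2+2\|w_n-y_n\|^2$ (the last bound using $\|x_n-w_n\|=\alpha_n\|x_n-x_{n-1}\|\le\epsilon_n$ from \eqref{hqal}), which yields
\begin{equation*}
\|y_n-p\|^2\ge \frac12\|x_n-p\|^2-2\epsilon_n^2-2\|w_n-y_n\|^2 .
\end{equation*}
Substituting this back, the $\|w_n-y_n\|^2$ contributions collect with coefficient $-(1-2\lambda_n^2L^2-4\lambda_n\mu)$, which is negative for $n$ large because $\lambda_n\to 0$; hence that term may be discarded. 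The surviving contraction is $-\lambda_n\mu\|x_n-p\|^2=-\frac{4a}{n^\alpha}\|x_n-p\|^2$, so the coefficient of $\|x_n-p\|^2$ becomes $1+\epsilon_n-\frac{4a}{n^\alpha}$. Here the second delicate point arises: since $\epsilon_n=\mathcal{O}(n^{-\theta})$ with $\theta>1\ge\alpha$, one has $\epsilon_n=o(n^{-\alpha})$, so for $n$ large enough $\epsilon_n\le\frac{3a}{n^\alpha}$ and the coefficient is $\le 1-\frac{a}{n^\alpha}$. This is precisely the role of the factor $4$ in the step size \eqref{e:step}: it leaves a margin that absorbs the inertial perturbation while still producing the clean decrease $\frac{a}{n^\alpha}$ required by Lemma \ref{lmsn}.

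Taking full expectation, the residual additive terms $c\lambda_n^2+4\lambda_n\mu\epsilon_n^2+\epsilon_n^2+\epsilon_n$ are each $\mathcal{O}(n^{-2\alpha})$ or $\mathcal{O}(n^{-\theta})$, hence bounded by $b/n^\beta$ with $\beta=\min\{2\alpha,\theta\}$. Thus there exists $N_0$ with
\begin{equation*}
(\forall n\ge N_0)\qquad s_{n+1}\le\Big(1-\frac{a}{n^\alpha}\Big)s_n+\frac{b}{n^\beta}.
\end{equation*}
Because $\alpha>1/2$ and $\theta>1$ force $\beta>1$, and $a\le\beta$ is assumed, Lemma \ref{lmsn} applies after re-indexing so its threshold $n_0$ exceeds $N_0$ (the finitely many initial terms are absorbed into the constants).

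Finally I would read off the three regimes directly from Lemma \ref{lmsn}. For $\alpha=1$, bound \eqref{alpha11} gives a first term $\mathcal{O}(n^{-a})$, while $\varphi_{a+1-\beta}(n)/(n+1)^a$ produces $\mathcal{O}(n^{1-\beta})$ when $a\ne\beta-1$ and $\mathcal{O}(n^{-a}\ln n)$ when $a=\beta-1$ (since $\varphi_0=\log$), giving the last two lines of \eqref{rate}. For $1/2<\alpha<1$, bound \eqref{alpha01} has an exponential factor $\exp\!\big(-at(n+1)^{1-\alpha}/(1-\alpha)\big)$ that decays faster than any power and annihilates the bounded $\varphi_{1-\beta}$ contribution, leaving the dominant term $\mathcal{O}(n^{-(\beta-\alpha)})=\mathcal{O}(n^{\alpha-\beta})$, which is the first line of \eqref{rate}. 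This completes the plan.
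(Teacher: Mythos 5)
Your proposal is correct and takes essentially the same route as the paper's own proof: the identical lower bound $\|y_n-p\|^2\ge\tfrac12\|x_n-p\|^2-2\epsilon_n^2-2\|w_n-y_n\|^2$ derived from \eqref{hqal}, the same absorption of the inertial and noise terms to reach the scalar recursion $s_{n+1}\le(1-an^{-\alpha})s_n+bn^{-\beta}$ with $\beta=\min\{2\alpha,\theta\}$, and the same regime-by-regime read-off from Lemma \ref{lmsn}. The only difference is constant-factor bookkeeping in the strong-monotonicity term (your $-2\lambda_n\mu\|y_n-p\|^2$ versus the paper's $-\lambda_n\mu\|y_n-p\|^2$ in \eqref{ineE222}), which the eventual thresholds $n_0,n_1$ absorb without affecting the rates in \eqref{rate}.
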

\begin{proof}Using the strong monotonicity of $A+B$, we rewrite \eqref{ineE22}
 \begin{align} \label{ineE222}
	  	    \mathsf{E}[\|x_{n+1}-p\|^2|\EuScript{F}_n] &\le (1+\epsilon_n)\|x_n-p\|^2-(1- 2\lambda_n ^2 L^2) \|w_n-y_n\|^2+ \lambda_n^2 \E[\|s_n-By_n\|^2|\EuScript F_n]\notag \\
	  &\ \ \ +2\lambda_n^2\|r_n-Bw_n\|^2- \lambda_n\mu\|y_n-p\|^2+\epsilon_n^2+\epsilon_n.  
	  	  \end{align}
	  	  Using Cauchy-Schwarz inequality, we have 
	  	  \begin{align}
	  	      \|x_n-p\|^2&\le 2\big(\|x_n-y_n\|^2+\|y_n-p\|^2 \big) \notag \\
	  	      &\le 4\big(\|x_n-w_n\|^2+\|w_n-y_n\|^2 \big)+2\|y_n-p\|^2\notag \\
	  	      & \le 4 \epsilon_n^2+4\|w_n-y_n\|^2+2\|y_n-p\|^2
	  	  \end{align}
	  	  which is equivalent to
	  	  \begin{align}
	  	      \|y_n-p\|^2 &\ge \dfrac{\|x_n-p\|^2}{2}-2\epsilon_n^2-2\|w_n-y_n\|^2.
	  	  \end{align}
	  	  Hence \eqref{ineE222} implies that
	  	  \begin{align} \label{ineE223}
	  	    \mathsf{E}[\|x_{n+1}-p\|^2|\EuScript{F}_n] &\le (1+\epsilon_n-\frac{\lambda_n\mu}{2})\|x_n-p\|^2-(1- 2\lambda_n ^2 L^2-2\lambda_n \mu) \|w_n-y_n\|^2\notag \\
	  &\ \ \ + \lambda_n^2 \E[\|s_n-By_n\|^2|\EuScript F_n] +2\lambda_n^2\|r_n-Bw_n\|^2+2\lambda_n \mu \epsilon_n^2+\epsilon_n^2+\epsilon_n.  
	  	  \end{align}
	  	  We have that there exists $n_0 \in \NN$ such that $\forall n \ge n_0$ 
	  	  \begin{align}
	  	      \begin{cases}
	  	              \epsilon_n \le \frac{\lambda_n\mu}{4},\\
	  	              1-2\lambda_n^2L^2-2\lambda_n\mu \ge 0,\\
	  	              2\lambda_n \mu \epsilon_n^2+\epsilon_n^2+\epsilon_n \le 2 \epsilon_n.
	  	      \end{cases}
	  	  \end{align}
	  	  Therefore \eqref{ineE223} implies that for $n \ge n_0$, we have
	  	  \begin{align}
	  	      \mathsf{E}[\|x_{n+1}-p\|^2] &\le (1-\frac{\lambda_n\mu}{4})\E[\|x_n-p\|^2]
	 +c\lambda_n^2+2\epsilon_n \notag\\
	 &=(1-an^{-\alpha})\E[\|x_n-p\|^2]+\dfrac{16ca^2}{\mu}n^{-2\alpha}+2\epsilon_n
	  	  \end{align}
	  	  From the definition of $\theta, \beta$, there exist $n_1 \in \NN$ and $b>0$ such that $\forall n \ge n_1$, we get
	  	  \begin{align}
	  	       \mathsf{E}[\|x_{n+1}-p\|^2] \le (1-an^{-\alpha})\E[\|x_n-p\|^2]+b n^{-\beta}.
	  	  \end{align}
	  	  Using Lemma \ref{lmsn}, we obtain:
	  	  
	  	  	  	  In case $1/2<\alpha<1$, from \eqref{alpha01}, we have $\mathsf{E}[\|x_n-p\|^2]=\mathcal{O}(n^{\alpha-\beta}).$ 
	  	  	  	  
	  	  	  	  In case $\alpha=1$, from \eqref{alpha11} and \eqref{phi}, we get
	  	  	  	  \begin{align}
	  	  	  	      \E[\|x_n-p\|^2]=\begin{cases}
	  	  	  	             \mathcal{O}(n^{-a})+\mathcal{O}(n^{1-\beta}) \ &\text{if}\ a \neq \beta-1\\
	  	  	  	             \mathcal{O}(n^{-a})+\mathcal{O}(\frac{\ln n}{n^a}) \ &\text{if}\ a=\beta-1
	  	  	  	      \end{cases}
	  	  	  	  \end{align}
which proves the desired result.
\end{proof}
\begin{remark}Here are some remarks.
\begin{enumerate}
\item[{\upshape(1)}] The strong almost sure convergence of the iterates is obtained from the condition \eqref{dkv}.
    \item[{\upshape(2)}] It follows from \eqref{rate} that the best rate $\mathcal{O}(1/n)$ is derived with $\alpha=1$, $\theta \ge 2$ and $a > 1$. This result is similar to the result in Theorem 3.4 (iii) \cite{bang2}. Note that, we do not require $B$ is cocoercive as in \cite{bang2}.
    \item[\upshape(3)] The rate $\mathcal{O}(1/n)$ is faster than the rate $\mathcal{O}(\log n/n)$ in \cite{DB1}.
    \item [\upshape(4)] In \cite{cui1}, the authors proved the linear convergence of $\E\|x_n-p\|^2$. However, as mentioned above, in each iteration, the algorithm requires a large number of samples and the oracle complexity is still $\mathcal{O}(1/\epsilon)$ which is equal to the complexity as in Theorem \ref{t:2}.
    \end{enumerate}
    \end{remark}
From Theorem \ref{thm1} and Theorem \ref{t:2}, we have the following Corollary:
\begin{corollary}\label{co:1}
Let $f\in\Gamma_0(\HH)$ and
 $h\colon\HH\to\RR$ be a convex differentiable function, with $L$-Lipschitz continuous gradient, given by an  expectation form $h(x)= \E_\xi [H(x,\xi)]$. In the expectation,  $\xi$ is a random vector whose probability distribution  is supported on a set  $\Omega \subset \RR^m$, and $H\colon \HH\times\Omega\to \RR$ is convex function with respect to the variable $x$. The problem is to
\begin{equation}\label{e:probapp}
\underset{  x\in \mathcal{H}}{\text{minimize}} \; f(x) +h(x),
\end{equation}
under the following assumptions: 
\begin{enumerate}
\item $\zer(\partial f +\nabla h)\not=\emp$.
\item It is possible to obtain independent and identically distributed (i.i.d.) samples  $(\xi_n)_{n\in\NN}$, $(\xi_n')_{n\in\NN}$ of $\xi$.
\item  $ \E[\nabla H(x,\xi)] = \nabla h(x)$ for $\forall x \in \HH$. 
\end{enumerate}
Let $(\lambda_n)_{n\in\NN}$ be a sequence in $\left]0,\pinf\right[$. Let
$ x_{-1},x_{0}$ be in $\HH$. Define
\begin{equation}  \label{algo:app1}
 (\forall n\in\NN)\quad
\begin{array}{l}
\left\lfloor
\begin{array}{l}
	w_n=x_n+\alpha_n(x_n-x_{n-1}),\\
y_n=\prox_{\lambda_n f}\big(w_n-\lambda_n \nabla H(w_n,\xi_n)\big)\\
x_{n+1} = y_n-\lambda_n\big(\nabla H(y_n,\xi_n')-\nabla H(w_n,\xi_n)\big),
\end{array}
\right.\\[2mm]
\end{array}\\
\end{equation}
where $(\alpha_n)_{n \in \NN}$ is defined as in Algorithm \ref{alg1}. Denote $\EuScript F_n=\sigma(\xi_0,\xi_0',\ldots,\xi_{n-1},\xi_{n-1}',\xi_n)$.
Then, the followings hold.
\begin{enumerate}
    \item If $f$ is $\mu$-strongly monotone ($\mu\in\left]0,+\infty\right[$),
and there exists a   constant $c$ such that 
\begin{equation}
 \E[(\|\nabla H(y_n,\xi_n')-\nabla h(y_n)\|^2+\|\nabla H(w_n,\xi_n)-\nabla h(w_n)\|^2)|\EuScript F_n] \leq c \ \ a.s..
\end{equation}
Assume that $\epsilon_n=\mathcal{O}(n^{-2})$ then for $ \lambda_{n} = \dfrac{8}{\mu n}$ $\forall n \in \NN$, we obtain
\begin{equation}
\E\left[\|\xx_n-\overline{\xx}\|^2\right] = \mathcal{O}(1/n),
\end{equation}
where $\overline{x}$ is the unique solution to \eqref{e:probapp}.
\item If $f$ is not strongly monotone, let $(\lambda_n)_{n\in\NN}$ be a sequence in $\left]\epsilon, \frac{1-\epsilon}{L}\right[$. Assume that 
\begin{equation}
\begin{cases}
\sum_{n\in\mathbb{N}}\E[\nabla H(w_n,\xi_n)-\nabla h(w_n)\|^2|\EuScript F_n]<+\infty\ \  a.s..\\
\sum_{n\in\mathbb{N}}\E[\nabla H(y_n,\xi_n')-\nabla h(y_n)\|^2|\EuScript F_n]<+\infty\ \  a.s..
\end{cases}
 \end{equation}
  Then  $(x_n)_{n \in \NN}$ converges weakly to a random variable $\overline{x}\colon\Omega\to\zer(\partial f+\nabla h)$ a.s..
    \end{enumerate}
\end{corollary}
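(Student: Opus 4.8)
The plan is to recognize the scheme \eqref{algo:app1} as a concrete instance of Algorithm \ref{alg1} under the identification $A=\partial f$, $B=\nabla h$, and then to invoke Theorem \ref{t:2} for part (1) and Theorem \ref{thm1} for part (2). First I would check that Assumption \ref{giathiet} holds for this pair: since $h$ is convex with $L$-Lipschitz gradient, $B=\nabla h$ is monotone and $L$-Lipschitz, giving (A1); since $f\in\Gamma_0(\HH)$, $A=\partial f$ is maximal monotone, giving (A2); and $\zer(\partial f+\nabla h)\neq\emp$ is assumption (1), giving (A3). Because $\prox_{\lambda_n f}=J^{\partial f}_{\lambda_n}=(\Id+\lambda_n\partial f)^{-1}$, the $y_n$-update in \eqref{algo:app1} is exactly the resolvent step of Algorithm \ref{alg1} with the random vector $r_n=\nabla H(w_n,\xi_n)$, and the $x_{n+1}$-update is the forward correction with $s_n=\nabla H(y_n,\xi_n')$. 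Thus $\zer(A+B)$ coincides with the solution set of \eqref{e:probapp}.

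Next I would verify the unbiasedness hypothesis of Step 2 of Algorithm \ref{alg1}. The key is the ordering built into the filtration $\EuScript{F}_n=\sigma(\xi_0,\xi_0',\ldots,\xi_{n-1},\xi_{n-1}',\xi_n)$: the iterate $y_n$ depends only on the samples up to and including $\xi_n$, hence is $\EuScript{F}_n$-measurable, whereas $\xi_n'$ is independent of $\EuScript{F}_n$. Combining this with assumption (3), $\E[\nabla H(x,\xi)]=\nabla h(x)$, yields $\E[s_n\mid\EuScript{F}_n]=\E[\nabla H(y_n,\xi_n')\mid\EuScript{F}_n]=\nabla h(y_n)=By_n$, which is precisely the required unbiasedness.

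For part (1) I would apply Theorem \ref{t:2}. Strong convexity of $f$ means $A=\partial f$ is $\mu$-strongly monotone, so $A+B$ is $\mu$-strongly monotone. Matching the step size, $\lambda_n=8/(\mu n)$ corresponds to $\alpha=1$ and $a=2$ in \eqref{e:step}; the assumed uniform bound on $\E[(\|\nabla H(y_n,\xi_n')-\nabla h(y_n)\|^2+\|\nabla H(w_n,\xi_n)-\nabla h(w_n)\|^2)\mid\EuScript{F}_n]$ gives, after absorbing the factor $2$ into the constant, the bound $\E[(2\|r_n-Bw_n\|^2+\|s_n-By_n\|^2)\mid\EuScript{F}_n]\le c$ needed there. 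With $\epsilon_n=\mathcal{O}(n^{-2})$ we have $\theta=2$ and $\beta=\min\{2\alpha,\theta\}=2$, so $a=2\le\beta$ and $a\neq\beta-1$; the second line of \eqref{rate} then gives $\E[\|x_n-\overline{x}\|^2]=\mathcal{O}(n^{-2})+\mathcal{O}(n^{-1})=\mathcal{O}(1/n)$.

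For part (2) I would apply the first part of Theorem \ref{thm1}, and here lies the only genuinely subtle point. That theorem demands the \emph{unconditional} summability $\sum_n\|r_n-Bw_n\|^2<+\infty$ a.s., while the hypothesis is phrased as a conditional expectation. The resolution is measurability: since $w_n$ is $\EuScript{F}_n$-measurable and $\xi_n\in\EuScript{F}_n$, the quantity $\|r_n-Bw_n\|^2=\|\nabla H(w_n,\xi_n)-\nabla h(w_n)\|^2$ is itself $\EuScript{F}_n$-measurable, so its conditional expectation equals itself and the two summability statements coincide. The term involving $s_n$ uses $\xi_n'\notin\EuScript{F}_n$ and stays genuinely conditional, matching \eqref{e:sum} directly. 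With both conditions of Theorem \ref{thm1}(i) met, $(x_n)_{n\in\NN}$ converges weakly a.s.\ to a $\zer(\partial f+\nabla h)$-valued random variable. I expect this conditional-versus-unconditional bookkeeping for $r_n$ to be the main obstacle; the remainder is a direct substitution into the two main theorems.
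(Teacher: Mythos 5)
Your proposal is correct and follows exactly the paper's route: the paper's own proof is a one-line identification $A=\partial f$, $B=\nabla h$, $r_n=\nabla H(w_n,\xi_n)$, $s_n=\nabla H(y_n,\xi_n')$ followed by an appeal to Theorems \ref{thm1} and \ref{t:2}. You simply spell out the verifications the paper leaves implicit (Assumption \ref{giathiet}, the unbiasedness $\mathsf{E}[s_n\mid\EuScript{F}_n]=By_n$, the parameter matching $\alpha=1$, $a=2$, $\beta=2$ in \eqref{rate}, and the $\EuScript{F}_n$-measurability of $\|r_n-Bw_n\|^2$ reconciling the conditional and unconditional summability), all of which are accurate.
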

\begin{proof} The conclusions are followed from Theorem \ref{thm1} $\&$ \ref{t:2} where 
\begin{equation}
    A =\partial f, B =\nabla h,\; \text{and}\; (\forall n\in\NN)\; r_n = \nabla H(w_n,\xi_n), \ s_n=\nabla H(y_n,\xi_n').
\end{equation}
\end{proof}

\section{Saddle point problem}
Now, we study the primal-dual problem which was firstly investigated
in \cite{Combet2}. This typical structured primal-dual framework covers a widely class of convex
optimization problems and it has found many applications to image processing, machine learning
\cite{Combet2,plc7,Pesquet15,Cham16,Ouy13}.
Based on the duality nature of this framework, we design a new stochastic primal-dual splitting method and research the ergodic convergence
of the primal-dual gap.
\begin{problem}\label{app2} Let $\HH$ and $\GG$ be separable real Hilbert spaces. 
 Let $f\in\Gamma_0(\HH)$, $g\in\Gamma_0(\GG)$ and $h\colon\HH\to\RR$ be a convex differentiable function, with $L_h$-Lipschitz continuous gradient, given by an  expectation form $h(x)= \E_\xi [H(x,\xi)]$. In the expectation,  $\xi$ is a random vector whose probability distribution $P$ is supported on a set  $\Omega_p \subset \RR^m$, and $H\colon \HH\times\Omega\to \RR$ is convex function with respect to the variable $x$.
Let $\ell \colon\GG \to \RR$ be a convex differentiable function with $L_\ell$-Lipschitz continuous gradient, and given by an  expectation form $\ell(v)= \E_\xi [L(v,\xi)]$. In the expectation,  $\zeta$ is a random vector whose probability distribution  is supported on a set $\Omega_D \subset \RR^d$, and $L\colon \GG\times\Omega_D\to \RR$ is convex function with respect to the variable $v$. Let $K\colon\HH\to\GG$ be a bounded linear operator. 
The primal problem is to 
\begin{align}
\underset{  x\in \mathcal{H}}{\text{minimize}} \; h(x)+(\ell^*\vuo g)(Kx)+f(x), \label{e:primalap}
\end{align}
and the dual problem is to 
\begin{align}
\underset{  v\in \mathcal{G}}{\text{minimize}} \; (h+f)^*(-K^*v)+g^*(v) +\ell(v), \label{e:dualap}
\end{align}
under the following assumptions: 
\begin{enumerate}
\item There exists a point $(x^\star,v^\star)\in\HH\times\GG$ such that the primal-dual gap function defined by
\begin{align}
G:&\mathcal{H}\times\mathcal{G} \to \mathbb R \cup \{-\infty,+\infty\} \notag\\
&(x,v) \mapsto h(x)+f(x)+\scal{Kx}{v} -g^*(v) -\ell(v) \label{s}
\end{align}
verifies the following condition:
\begin{align}\label{e:saddle}
\big(\forall x\in\HH\big)\big(\forall v\in\GG\big)\;
G(x^\star,v) \leq G(x^\star,v^\star) \leq G(x,v^{\star}).
\end{align}
\item It is possible to obtain independent and identically distributed (i.i.d.) samples  $(\xi_n,\zeta_n)_{n\in\NN}$ and $(\xi_n',\zeta_n')_{n\in\NN}$  of $(\xi,\zeta)$.
\item  $\forall x \in \HH, \ v \in \GG$, we have
$\E_{(\xi,\zeta)}[(\nabla H(x,\xi),\nabla L(v,\zeta))] = (\nabla h(x),\nabla \ell(v))$. 

\end{enumerate}

\noindent Using the standard technique as in \cite{Combet2}, 
we derive from \eqref{algo:app1} the following
stochastic primal-dual splitting method,
Algorithm \ref{al:spd}, for solving Problem \ref{app2}. The weak almost sure convergence and the convergence in expectation of the resulting algorithm can be derived easily from Corollary \ref{co:1} and hence we omit them here.
\end{problem}

\begin{algorithm}\label{al:spd} 
 Choose $\theta \in [0,1]$. Let $(\lambda_n)_{n \in \NN}$ be a positive sequence, $(\epsilon_n)_{n \in \NN}\subset[0,+\infty)$ satisfying
	 $$\sum_{n=0}^{+\infty}\epsilon_n<+\infty.$$
	 	Let $(x_{-1},x_{0})\in\HH^2$ and  $(v_{-1},v_{0})\in\GG^2$.
	 	Iterates
\begin{equation}\label{e:spd}
\begin{array}{l}
\operatorname{For}\; n=0,1,\ldots,\\
\left\lfloor
\begin{array}{l}
    \alpha_n=\begin{cases}
	\theta&\text{if $x_n=x_{n-1}$,}\\
	\min\bigg\{\theta,\dfrac{\epsilon_n}{\|x_n-x_{n-1}\|}\bigg\}&\text{if $x_n \neq x_{n-1}$.}
	\end{cases}\\
		w_n=x_n+\alpha_n(x_n-x_{n-1})\\
	u_n = v_n+\alpha_n(v_n-v_{n-1})\\
	y_{n}=\text{prox}_{\lambda_n f}(w_n-\lambda_n \nabla H(w_n,\xi_n)  -\lambda_n K^* u_n)\\
	z_{n}=\text{prox}_{\lambda_n g^*}(u_n-\lambda_n\nabla L(u_n,\zeta_n) + \lambda_n Kw_n)\\
	v_{n+1}=z_n-\lambda_n(\nabla L(z_n,\zeta_n')-\nabla L(u_n,\zeta_n))+\lambda_n K(y_n-w_n)\\
	x_{n+1}=y_n-\lambda_n(\nabla H(y_n,\xi_n')-\nabla H(w_n,\xi_n))-\lambda_n K^*(z_n-u_n)
	\end{array}
\right.\\[2mm]
\end{array}
	\end{equation}
\end{algorithm}
For simple, set $\mu = \max\{L_h,L_\ell\}$ and let us define some notations in the space $\HH\times\GG$ where the scalar product and the associated norm are defined in the normal manner,
\begin{equation}
    \begin{cases}
            \mathsf{x} &= (x,v),\quad\; \ww_n=(w_n,u_n), \quad
        \mathsf{x}_n = (x_n,v_n),\quad\;
                \mathsf{y}_n = (y_n,z_n),\; \\
                \rr_n&=(\nabla H(w_n,\xi_n),\nabla L(u_n,\zeta_n)),\\
          \mathsf{s}_n &= (\nabla H(y_n,\xi_n'),\nabla L(z_n,\zeta_n') ),\\
           \nabla \hh(\ww_n)&=(\nabla h(w_n),\ell(u_n)),\ \nabla \hh(\yy_n) = (\nabla h(y_n), \nabla \ell(z_n)).\\
    \end{cases}
\end{equation}

\begin{theorem}\label{thrego} Let $(\lambda_n)_{n\in\NN}$ be a sequence in $\left]0,\frac{1}{\sqrt{1+\epsilon}(\mu+\|K\|)} \right[$ $(\epsilon>0)$ such that
\begin{equation}\label{e:sumgam}
    \; C=\lambda_n^2\E\|\sss_n-\nabla \hh(\yy_n)\|^2 +(1+\frac{1}{\epsilon})\lambda_n^2\E\|\rr_n-\nabla \hh(\ww_n)\|^2 < \infty.
\end{equation} 
For every $N\in\NN$, define 
\begin{equation}
	\hat{y}_N= \bigg(\sum_{n=0}^N\lambda_n y_{n}\bigg)/\bigg(\sum_{n=0}^N\lambda_n\bigg)\; \text{and}\;
	\hat {z}_N=\bigg(\sum_{n=0}^N \lambda_nz_{n}\bigg)/\bigg(\sum_{n=0}^N\lambda_n\bigg).
	\end{equation}
Then the following holds:
\begin{equation}
\E[G(\hat{y}_{N},v)-G(x,\hat{z}_{N})] \leq \bigg(\frac12(1+S T )(\|(x_0,v_0)-(x,v)\|^2+C)\bigg)\bigg/\bigg(\sum_{n=0}^N\lambda_n\bigg),
\end{equation}
where 
$S=\sum \limits_{n \in \NN} \epsilon_n$ and $T=\prod_{n=0}^{+\infty}(1+\epsilon_n).$
\end{theorem}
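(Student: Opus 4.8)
The plan is to recognise Algorithm \ref{al:spd} as Algorithm \ref{alg1} transported to the product space $\HH\times\GG$. First I would set $A=\partial f\times\partial g^*$ (maximal monotone by Lemma \ref{lv1}) and introduce the skew-symmetric operator $\MM\colon(x,v)\mapsto(K^*v,-Kx)$, so that the forward operator is $\nabla\hh+\MM$, which is monotone and $(\mu+\norm{K})$-Lipschitz (since $\nabla\hh$ is $\mu$-Lipschitz and $\norm{\MM}=\norm{K}$). A direct inspection of \eqref{e:spd} shows that $\yy_n=(\Id+\lambda_n A)^{-1}\big(\ww_n-\lambda_n(\rr_n+\MM\ww_n)\big)$ and $\xx_{n+1}=\yy_n-\lambda_n\big((\sss_n-\rr_n)+\MM(\yy_n-\ww_n)\big)$, i.e. $\rr_n+\MM\ww_n$ and $\sss_n+\MM\yy_n$ are exactly the estimators of the forward operator at $\ww_n$ and $\yy_n$. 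This is why the admissible step-size range is governed by $\sqrt{1+\epsilon}(\mu+\norm{K})$.

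Next I would pass to the ergodic gap. Since $G(\cdot,v)$ is convex and $G(x,\cdot)$ is concave, Jensen's inequality applied to the weighted averages $\hat y_N,\hat z_N$ gives
\[
G(\hat y_N,v)-G(x,\hat z_N)\le\frac{1}{\Lambda_N}\sum_{n=0}^N\lambda_n\big(G(y_n,v)-G(x,z_n)\big),\qquad \Lambda_N=\sum_{n=0}^N\lambda_n .
\]
Writing $\Psi_n=G(y_n,v)-G(x,z_n)$ and $\xx=(x,v)$, the core step is a per-iterate estimate. From the resolvent step one has $\frac{\ww_n-\yy_n}{\lambda_n}-\rr_n-\MM\ww_n\in A\yy_n$, so the subgradient inequality for the separable convex function $(x,v)\mapsto f(x)+g^*(v)$, the convexity inequalities for $h$ and $\ell$, and the skew-symmetry identity $\scal{\MM\yy_n}{\yy_n-\xx}=\scal{Ky_n}{v}-\scal{Kx}{z_n}$ combine; after substituting the update rule, all the $\MM$-terms cancel and one is left with
\[
\lambda_n\Psi_n\le\scal{\ww_n-\xx_{n+1}}{\yy_n-\xx}+\lambda_n\scal{\nabla\hh(\yy_n)-\sss_n}{\yy_n-\xx}.
\]

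I would then expand the first inner product by polarization into $\tfrac12\norm{\ww_n-\xx}^2-\tfrac12\norm{\xx_{n+1}-\xx}^2-\tfrac12\norm{\ww_n-\yy_n}^2+\tfrac12\norm{\yy_n-\xx_{n+1}}^2$, and bound $\norm{\yy_n-\xx_{n+1}}^2$ exactly as in Lemma \ref{lm1add}: decomposing $\yy_n-\xx_{n+1}$ into the Lipschitz part $\lambda_n\big((\nabla\hh+\MM)\yy_n-(\nabla\hh+\MM)\ww_n\big)$ plus the oracle errors $\sss_n-\nabla\hh(\yy_n)$ and $\rr_n-\nabla\hh(\ww_n)$, handling the cross terms by a Young splitting of weight $\epsilon$ (which matches the $(1+1/\epsilon)$ factor in \eqref{e:sumgam}) and using $\lambda_n(\mu+\norm{K})\sqrt{1+\epsilon}<1$ so that the $\norm{\ww_n-\yy_n}^2$ contributions cancel with a nonpositive remainder. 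Taking $\E[\,\cdot\,|\EuScript{F}_n]$, the linear term vanishes because $\E[\sss_n|\EuScript{F}_n]=\nabla\hh(\yy_n)$ with $\yy_n$ being $\EuScript{F}_n$-measurable, while the squared oracle errors accumulate into the constant $C$; the inertial estimate $\norm{\ww_n-\xx}^2\le(1+\epsilon_n)\norm{\xx_n-\xx}^2+\epsilon_n^2+\epsilon_n$ (as in \eqref{eq21}) turns the right-hand side into $(1+\epsilon_n)\tfrac12\E\norm{\xx_n-\xx}^2-\tfrac12\E\norm{\xx_{n+1}-\xx}^2$ plus summable terms.

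Summing over $n$ and telescoping leaves $\sum_{n}\lambda_n\E\Psi_n\le\tfrac12\norm{\xx_0-\xx}^2+\tfrac12C+\sum_n\epsilon_n\,\tfrac12\E\norm{\xx_n-\xx}^2$, and the last sum is where the factor $1+ST$ is produced: discarding the gap term $\lambda_n\E\Psi_n$, which is nonnegative by the saddle inequality \eqref{e:saddle}, the same recursion yields $\tfrac12\E\norm{\xx_n-\xx}^2\le T\big(\tfrac12\norm{\xx_0-\xx}^2+\tfrac12C\big)$ by a discrete Grönwall (product) bound with $T=\prod_n(1+\epsilon_n)$, whence $\sum_n\epsilon_n\,\tfrac12\E\norm{\xx_n-\xx}^2\le S\,T\big(\tfrac12\norm{\xx_0-\xx}^2+\tfrac12C\big)$; collecting terms and dividing by $\Lambda_N$ gives the claim. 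I expect the main obstacle to be the per-iterate identity, namely recombining the $\MM$-terms coming from the two prox steps with the Tseng correction so that everything collapses into the single telescoping inner product above, together with the careful separation of the mean-zero noise (which vanishes under $\E[\,\cdot\,|\EuScript{F}_n]$) from the oracle variance (which must land in $C$). The Grönwall bookkeeping that produces $1+ST$ for a general comparison point $(x,v)$ — reconciling the use of the saddle inequality with the clean constant — is the second, more technical, point to handle carefully.
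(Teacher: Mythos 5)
Your proposal follows essentially the same route as the paper's own proof: the per-iterate gap bound obtained from the two prox/convexity inequalities glued together by the skew coupling term, the polarization identity, the conditional-expectation variance bound with the $(1+\epsilon)$/$(1+\tfrac1\epsilon)$ Young split and the step-size cancellation of $\|\yy_n-\ww_n\|^2$, the inertial estimate, the discarded-gap recursion giving the factor $T$, the summation giving $(1+ST)$, and the final Jensen step. The only cosmetic difference is that you run the variance estimate through the product-space operator $\nabla\hh+\MM$ where the paper expands componentwise (the resulting bound is identical); note that both you and the paper drop the gap term using a nonnegativity that \eqref{e:saddle} only guarantees at the saddle point $(x^\star,v^\star)$ rather than at an arbitrary comparison point $(x,v)$ --- a shared caveat which you, unlike the paper, explicitly flag as needing care.
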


\begin{proof}
Since $\ell$ is convex, we get
\begin{equation}\label{e:rr1}
    \ell(z_n)\leq \ell(v) +
    \scal{ \nabla \ell(z_n)}{z_n-v}.
\end{equation}
From \eqref{e:spd}, we have 
\begin{equation}
-(z_n-u_n+\lambda_n \nabla L(u_n,\zeta_n)-\lambda_n K w_n) \in
\lambda_n\partial g^*(z_n),
\end{equation}
and hence, using the convexity of $g^*$, 
\begin{equation}
    g^*(v)-g^*(z_n)\geq
    \frac{1}{\lambda_n}\scal{z_n-v}{z_n-u_n+\lambda_n \nabla L(u_n,\zeta_n)-\lambda_n K w_n}.\label{e:rr2}
\end{equation}
Therefore, we derive from \eqref{e:rr1}, \eqref{e:rr2} and \eqref{s} that 
\begin{align}
    G(y_{n},v)&-G(y_{n},z_{n})=\scal{K y_{n}}{v-z_{n}}  -g^*(v)+g^*(z_{n})-\ell(v)+\ell(z_{n}) \notag \\
& \le \scal{ K y_{n}}{v-z_{n}}+\dfrac{1}{\lambda_n }\scal{v-z_{n}}{z_{n}-u_n+\lambda_n \nabla L(u_n,\zeta_n)-\lambda_n K w_n } \notag \\ 
&\ \ +\scal{ \nabla \ell(z_n)}{z_{n}-v} \notag \\
&=\scal {K(y_{n}-w_n)}{v-z_{n}} +\dfrac{1}{\lambda_n} \scal{v-z_{n}}{z_{n}-u_n} \notag \\
&\ \  + \scal{ \nabla \ell(z_n)-\nabla L(u_n,\zeta_n)}{z_{n}-v} \notag \\
&=\scal{K(y_n-w_n)-\nabla L(z_n,\zeta_{n+1/2})+\nabla L(u_n,\zeta_n)}{v-z_n}+\dfrac{1}{\lambda_n} \scal{v-z_{n}}{z_{n}-u_n}\notag\\
&\ \ +\scal{\nabla l(z_n)-\nabla L(z_n,\zeta_{n+1/2})}{z_n-v}\notag \\
&=\scal{\frac{v_{n+1}-z_n}{\lambda_n}}{v-z_n}+\dfrac{1}{\lambda_n} \scal{v-z_{n}}{z_{n}-u_n}\notag\\
&\ \ +\scal{\nabla l(z_n)-\nabla L(z_n,\zeta_{n+1/2})}{z_n-v} \label{ine1d}
\end{align}
By the same way, we have 
\begin{equation}
   h(y_n)-h(x)
   \leq \scal{ \nabla h(y_n)}{y_n-x},
\end{equation}
and
\begin{equation}
   -( y_n-w_n+\lambda_n \nabla H(w_n,\xi_n)+\lambda_n K^*(u_n))
    \in\lambda_n\partial f(y_n),
\end{equation}
which implies
\begin{equation}
    f(y_n)-f(x)\leq \frac{1}{\lambda_n} \scal{x-y_n}{y_n-w_n+\lambda_n \nabla H(w_n,\xi_n)+\lambda_n K^*(u_n)}.
\end{equation}
In turn, we have
\begin{align}
G(y_n,z_n)&-G(x,z_n)=h(y_n)-h(x)+\scal{ K(y_n-x)}{z_n}+f(y_n)-f(x) \notag \\
&\le \scal{ \nabla h(y_n)}{y_n-x} +\scal{K(y_n-x)}{z_n} \notag\\
&\ \ +\frac{1}{\lambda_n} \scal{x-y_n}{y_n-w_n+\lambda_n \nabla H(w_n,\xi_n)+\lambda_n K^*(u_n)} \notag\\
&=\frac{1}{\lambda_n} \scal{x-y_n}{y_n-w_n}+\scal{x-y_n}{\nabla H(w_m,\xi_n)+K^*(u_n-z_n)-\nabla H(y_n,\xi_{n+1/2})}\notag\\
&\ \ +\scal{\nabla h(y_n)-\nabla H(y_n,\xi_{n+1/2})}{y_n-x}\notag\\
&=\frac{1}{\lambda_n} \scal{x-y_n}{y_n-w_n}+\scal{x-y_n}{\dfrac{x_{n+1}-y_n}{\lambda_n}}
\notag \\
&\ \ +\scal{\nabla h(y_n)-\nabla H(y_n,\xi_{n+1/2})}{y_n-x}. \label{ine2d}
\end{align}

It follows from \eqref{ine1d} and \eqref{ine2d} that
\begin{align}
&G(y_n,v)-G(x,z_n) \notag \\
&\quad \le \dfrac{1}{\lambda_n} \big(\scal{\xx-\yy_n}{\yy_n-\ww_n}+\scal{\xx-\yy_n}{\xx_{n+1}-\yy_n} \big)+\scal{\nabla \hh(\yy_n)-\sss_n}{\yy_n-\xx},
\end{align}
which is equivalent to
\begin{align}
    2 \lambda_n& \big(G(y_n,v)-G(x,z_n)\big) \notag \\
    &\le \|\ww_n-\xx\|^2-\|\yy_n-\xx\|^2-\|\yy_n-\ww_n\|^2-\big(\|\xx_{n+1}-\xx\|^2-\|\xx_{n+1}-\yy_n\|^2-\|\yy_n-\xx\|^2 \big)\notag \\
    &\ \ +\scal{\nabla \hh(\yy_n)-\sss_n}{\yy_n-\xx}\notag \\
    &=\|\ww_n-\xx\|^2-\|\xx_{n+1}-\xx\|^2 -\|\yy_n-\ww_n\|^2+\|\xx_{n+1}-\yy_n\|^2 +\scal{\nabla \hh(\yy_n)-\sss_n}{\yy_n-\xx},
 \label{g}
\end{align}
which implies
\begin{align}
    2 \lambda_n\E \big[G(y_n,v)-G(x,z_n)\big|\EuScript F_n] &\le \|\ww_n-\xx\|^2-\E[\|\xx_{n+1}-\xx\|^2|\EuScript F_n]-\|\yy_n-\ww_n\|^2\notag \\
    &\ \ +\E[\|\xx_{n+1}-\yy_n\|^2|\EuScript F_n],
\end{align}
where $\EuScript F_n=\sigma(\xi_0,\zeta_0,\xi_0',\zeta_0',\ldots,\xi_{n-1},\zeta_{n-1},\xi_{n-1}',\zeta_{n-1}',\xi_n,\zeta_n).$\\
Taking expectation both side of above inequality, we get
\begin{align}
    2\lambda_n \E\big[G(y_n,v)-G(x,z_n)\big] &\le \E\|\ww_n-\xx\|^2-\E\|\xx_{n+1}-\xx\|^2-\E\|\yy_n-\ww_n\|^2
    +\E\|\xx_{n+1}-\yy_n\|^2.\label{ineEno}
\end{align}
Now, we estimate the first and the last term in the right side of \eqref{ineEno}. For the first term, using \eqref{hqal}, we have
\begin{align}\label{inewx}
    \|\ww_n-\xx\|^2&=\|\xx_n+\alpha_n(\xx_n-\xx_{n-1})-\xx\|^2=\|\xx_n-\xx\|^2+\alpha_n^2\|\xx_n-\xx_{n-1}\|^2\notag \\
    &\ \ +2\alpha_n\scal{\xx_n-\xx}{\xx_n-\xx_{n-1}} \notag\\
    &\le \|\xx_n-\xx\|^2+\epsilon_n^2+2\epsilon_n\|\xx_n-\xx\|\notag\\
    &\le (1+\epsilon_n)\|\xx_n-\xx\|^2+\epsilon_n^2+\epsilon_n.
\end{align}
For the last term of \eqref{ineEno}
\begin{align}
    \|\xx_{n+1}-\yy_n\|^2=\|x_{n+1}-y_n\|^2+\|v_{n+1}-z_n\|^2.
\end{align}
From \eqref{e:spd}, we get
\begin{align}
    \|x_{n+1}-y_n\|^2&=\|\lambda_n(\nabla H(y_n,\xi_n')-\nabla H(w_n,\xi_n))+\lambda_n K^*(z_n-u_n)\|^2 \notag \\
    &=\lambda_n^2\|(\nabla H(y_n,\xi_n')-\nabla h(y_n))+\nabla h(y_n)-\nabla H(w_n,\xi_n))+K^*(z_n-u_n)\|^2 \notag \\
    &=\lambda_n^2\bigg(\|\nabla H(y_n,\xi_n')-\nabla h(y_n)\|^2+\|\nabla h(y_n)-\nabla H(w_n,\xi_n))+K^*(z_n-u_n)\|^2 \notag \\
    &\ \ \ +2 \scal{\nabla H(y_n,\xi_n')-\nabla h(y_n)}{\nabla h(y_n)-\nabla H(w_n,\xi_n))+K^*(z_n-u_n)}\bigg),
\end{align}
which implies that 
\begin{align}
    \E\|x_{n+1}-y_n\|^2 &\le \lambda_n^2\E[\|\nabla H(y_n,\xi_n')-\nabla h(y_n)\|^2] \notag \\
    &\ +\lambda_n^2\E[\|\nabla h(w_n)-\nabla H(w_n,\xi_n))+\nabla h(y_n)-\nabla h(w_n)+K^*(z_n-u_n)\|^2] \notag \\
    &\le \lambda_n^2\E[\|\nabla H(y_n,\xi_n')-\nabla h(y_n)\|^2]+(1+\frac{1}{\epsilon})\lambda_n^2\E[\|\nabla h(w_n)-\nabla H(w_n,\xi_n)\|^2]\notag \\
    &\ +(1+\epsilon)\lambda_n^2\E[\|\nabla h(y_n)-\nabla h(w_n)+K^*(z_n-u_n)\|^2]. \label{ineE1}
\end{align}
The last term in the right side of \eqref{ineE1} can be bounded
\begin{align}
    \|\nabla h(y_n)-\nabla h(w_n)+K^*(z_n-u_n)\|^2&=\|\nabla h(y_n)-\nabla h(w_n)\|^2+\|K^*(z_n-u_n)\|^2 \notag \\
    &\ +2\scal{\nabla h(y_n)-\nabla h(w_n)}{K^*(z_n-u_n)}\notag \\
    &\le L_h^2\|y_n-w_n\|^2+\|K\|^2\|z_n-u_n\|^2\notag \\
    &\ +2L_h \|K\|\|y_n-w_n\|\|z_n-u_n\|. \label{ineE2}
\end{align}
From \eqref{ineE1} and \eqref{ineE2}, we get
\begin{align}
    \E\|x_{n+1}&-y_n\|^2\le \lambda_n^2\E[\|\nabla H(y_n,\xi_n')-\nabla h(y_n)\|^2]+(1+\frac{1}{\epsilon})\lambda_n^2\E[\|\nabla h(w_n)-\nabla H(w_n,\xi_n)\|^2]\notag \\
    &\ +(1+\epsilon)\lambda_n^2\E [ L_h^2\|y_n-w_n\|^2+\|K\|^2\|z_n-u_n\|^2+2L_h \|K\|\|y_n-w_n\|\|z_n-u_n\|].\label{ineE3}
\end{align}
Similarly to \eqref{ineE3}, we have
\begin{align}
    \E\|v_{n+1}&-z_n\|^2\le \lambda_n^2\E[\|\nabla L(z_n,\zeta_n')-\nabla \ell(z_n)\|^2]+(1+\frac{1}{\epsilon})\lambda_n^2\E[\|\nabla \ell(u_n)-\nabla L(u_n,\zeta_n)\|^2]\notag \\
    &\ +(1+\epsilon)\lambda_n^2\E [ L_\ell^2\|z_n-u_n\|^2+\|K\|^2\|y_n-w_n\|^2+2L_\ell \|K\|\|z_n-u_n\|\|y_n-w_n\|].\label{ineE4}
\end{align}
Combining \eqref{ineE3} and \eqref{ineE4}, we derive
\begin{align}
    \E\|\xx_{n+1}-\yy_n\|^2&\le \lambda_n^2\E\|\sss_n-\nabla \hh(\yy_n)\|^2+(1+\frac{1}{\epsilon})\lambda_n^2\E\|\rr_n-\nabla \hh(\ww_n)\|^2\notag \\
    &\ +(1+\epsilon)(\mu+\|K\|)^2\lambda_n^2\E\|\yy_n-\ww_n\|^2\label{ineEx}
\end{align}
From \eqref{ineEno}, \eqref{inewx} and \eqref{ineEx}, using the condition of $\lambda_n$, i.e. $\lambda_n \le \frac{1}{\sqrt{1+\epsilon}(\mu+\|K\|)} $, we get
\begin{align}
2\lambda_n\E \big[G(y_n,v)-G(x,z_n)\big]&\le (1+\epsilon_n)\E\|\xx_n-\xx\|^2-\E\|\xx_{n+1}-\xx\|^2+c_n,\label{ineEE}
\end{align}
where $c_n=\lambda_n^2\E\|\sss_n-\nabla \hh(\yy_n)\|^2 +(1+\frac{1}{\epsilon})\lambda_n^2\E\|\rr_n-\nabla \hh(\ww_n)\|^2$. It follows from \eqref{ineEE} that
\begin{align}
    \E\|\xx_{n+1}-\xx\|^2\le (1+\epsilon_n)\E\|\xx_n-\xx\|^2+c_n.
\end{align}
Using above inequality $n$ times, we obtain
\begin{align}
     \E\|\xx_{n+1}-\xx\|^2&\le \prod_{i=0}^n(1+\epsilon_i)\|\xx_0-\xx\|^2+\sum \limits_{i=0}^n \prod_{j=i+1}^n(1+\epsilon_{j})c_i\notag\\
     &\le T \|\xx_0-\xx\|^2+T \sum \limits_{n \in \NN} c_n=T \|\xx_0-\xx\|^2+T C.
\end{align}
Summing \eqref{ineEE} from $n=0$ to $n=N$, we have
\begin{align}
    2\sum \limits_{n=0}^N \lambda_n\E \big[G(y_n,v)-G(x,z_n)\big]&\le \|\xx_0-\xx\|^2+\sum \limits_{n=0}^{N} \epsilon_n \E\|\xx_n-\xx\|^2-\E\|\xx_{n+1}-\xx\|^2+\sum\limits_{n=0}^Nc_n\notag \\
    &\le \|\xx_0-\xx\|^2+S(T \|\xx_0-\xx\|^2+T C)+C\notag \\
    &=(1+T S)(\|\xx_0-\xx\|^2+C).
\end{align}
Using the convexity-concavity of $G$, we obtain the desired result.
\end{proof}
\begin{remark}Here are some remarks.
\begin{enumerate}
    \item [\upshape(1)] The upper bound of $(\lambda_n)_{n \in \NN}$ in Theorem \ref{thrego} can be written as $\dfrac{1-\epsilon'}{\mu+\|K\|}$, where $\epsilon' \in ]0,1[.$ Indeed, for $\epsilon<\frac{1}{(1-\epsilon')^2}-1$, we get $\lambda_n<\frac{1}{\sqrt{1+\epsilon}(\mu+\|K\|)}.$ 
    \item [\upshape(2)] In the deterministic setting, 
    the convergence rate of the primal-dual gap for structure convex optimization involving infimal  convolutions were also investigated in \cite{Bot1,Bot2}. Furthermore, in the deterministic case and $\alpha_n=0$ $\forall n \in \NN$, our result is one in \cite{Bot2}. The convergence rate $\mathcal{O}(1/ N)$ of the primal-dual gap also obtain in \cite{te15}. 
    \item [\upshape(3)] In the stochastic setting, our  result is the same convergence rate of the primal-dual gap as in \cite{bang4,DB1}.
\end{enumerate}

\end{remark}

\bibliographystyle{abbrv}

\bibliography{AllBiD}

\end{document}